\DeclareMathOperator{\Hessian}{Hess}
\newcommand{\proset}{\,\mathrel{\lower 4pt\hbox{$\scriptscriptstyle/$}
\mkern -14mu\subseteq }\,} 
 \newtheorem{theorem}{Theorem}[section]
  \newtheorem{corollary}[theorem]{Corollary}
 \newtheorem{proposition}[theorem]{Proposition}
\newtheorem{remark}[theorem]{Remark}
 \newtheorem{example}[theorem]{Example}
\numberwithin{equation}{section}
\title{A STRUCTURAL ANALYSIS OF WARPED PRODUCT YAMABE GRADIENT SOLITONS}
\author{JAHNABI CHAKRABORTI, ANANDATEERTHA MANGASULI}
\begin{document}
\begin{abstract}
    We investigate Yamabe gradient solitons, which are warped product manifolds. We show that the fiber of a nontrivial warped product Yamabe gradient soliton has constant scalar curvature. Based on this result, we obtain a specific class of warped products that cannot be nontrivial solitons. Furthermore, we derive estimates of the scalar curvature and the soliton function of warped product solitons.
\end{abstract}
\maketitle

\section{\textbf{Introduction}}
In 1960, H. Yamabe \cite{Yamabe} posed the Yamabe problem, which involves finding a Riemannian metric of constant scalar curvature on a smooth manifold by a conformal change of a given metric. During the 1980s, R. Hamilton \cite{HA1} proposed a parabolic differential equation approach to address this problem. His method consists of evolving an initial metric through a geometric flow, called the Yamabe flow, to obtain a constant scalar curvature metric. The Yamabe flow preserves the conformal structure of the metric. Solving the Yamabe problem through the Yamabe flow has been the focus of extensive research in recent years; see the survey by Brendle \cite{brendle} and the references cited therein. Yamabe solitons are self-similar solutions to the Yamabe flow. Every compact Yamabe soliton has constant scalar curvature \cite{di}. There are noncompact Yamabe solitons that do not have constant scalar curvature, such as the Cigar soliton. In this paper, we focus on a subclass of Yamabe solitons, referred to as noncompact Yamabe gradient solitons. Though Yamabe gradient solitons are solutions to the Yamabe flow, there is an equivalent algebraic equation point of view definition.

 A Riemannian metric $g$ on a connected smooth $n-$manifold $M^n$ is called a \textit{Yamabe gradient soliton} if there exist a smooth function $h: M\rightarrow \mathbb{R}$ and a constant $\rho \in \mathbb{R}$ such that 
        \begin{equation} \label{GYS}
            \Hessian h=(R-\rho) g\hspace{3mm} \text{on} \hspace{1mm} M
        \end{equation}
    where $R$ is the scalar curvature w.r.t the metric $g$. It is denoted by $(M^n, g, h, \rho)$. A Yamabe gradient soliton $(M^n, g, h, \rho)$ is said to be steady, shrinking, and expanding if $\rho=0$, $\rho>0$, and $\rho<0$, respectively. The function $h$ is called the soliton function of the Yamabe gradient soliton. If $h$ is a constant function on $M$, then it is a trivial Yamabe gradient soliton.   If $\rho$ is a smooth function on $M$, then $(M^n, g, h, \rho)$ is called an\textit{ almost Yamabe gradient soliton}. It is known that every compact Yamabe gradient soliton has constant scalar curvature; hence, trivial \cite{di} \cite{hsu}. In 2012, Cao, Sun, and Zhang \cite{CSY} showed that every complete nontrivial n-dimensional Yamabe gradient soliton admits a warped product structure with a 1-dimensional base and an (n-1)-dimensional fiber of constant scalar curvature. In 2017, Tokura, Adriano, and Pina \cite{TBAP2} started working on the Yamabe gradient soliton with warped product structure, see \S~$2$, where the fiber has constant scalar curvature and the soliton function depends only on the base. In 2023, the authors proved the following:
\begin{theorem}\cite{TBAP1} \label{thm1}
    Let $(M^{n+m}=B^n\times_f F^m, g= g_{B}+f^2 g_{F}, h, \rho)$ be a complete and nontrivial warped product Yamabe gradient soliton. Then the soliton function $h$ does not depend on $F^m$ if and only if the fiber manifold $F^m$ has constant scalar curvature.
\end{theorem}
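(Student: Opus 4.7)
My plan is to decompose the soliton equation $\Hessian h = (R - \rho) g$ along the warped-product splitting. Using the standard O'Neill formulas $\nabla^M_X Y = \nabla^B_X Y$, $\nabla^M_X V = (X(f)/f)\, V$, and $\nabla^M_V W = \nabla^F_V W - f g_F(V, W)\, \nabla^B f$ for horizontal lifts $X, Y$ and vertical lifts $V, W$, a routine calculation yields three scalar equations:
\begin{align*}
\Hessian^B h(X, Y) &= (R - \rho)\, g_B(X, Y),\\
X(V(h)) &= \tfrac{X(f)}{f}\, V(h),\\
\Hessian^F h(V, W) + f g_F(V, W)\, g_B(\nabla^B f, \nabla^B h) &= (R - \rho)\, f^2 g_F(V, W).
\end{align*}
Together with the warped-product scalar curvature formula
\[
R \;=\; R_B + \tfrac{R_F}{f^2} - \tfrac{2m}{f}\,\Delta_B f - \tfrac{m(m-1)}{f^2}\,|\nabla_B f|^2,
\]
these identities form the tool kit for both implications.

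\textbf{Forward direction.} If $h$ is independent of $F$, then $V(h) \equiv 0$, and the fiber component of the soliton equation collapses to $g_B(\nabla^B f, \nabla^B h) = (R - \rho)\, f$. Expanding $R$ via the curvature formula and solving for $R_F$ yields $R_F = f(x)\, \Phi(x)$, where $\Phi$ is an expression in $R_B$, $f$, $\Delta_B f$, $|\nabla_B f|^2$, $g_B(\nabla^B f, \nabla^B h)$, and $\rho$, depending only on the base point. Since $R_F$ is intrinsically a function on the fiber, the only way to balance this equality is for $R_F$ to be a constant.

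\textbf{Converse.} Assume $R_F$ is constant, so that $R$ depends only on $x$. For every horizontal $X$, the mixed equation reads $X(V(h)) = X(\log f)\cdot V(h)$, which integrates to $V(h)(x,y) = f(x)\, \mu(V)(y)$ for each vertical $V$; equality of mixed partials shows $\mu$ is a closed $1$-form on $F$, and locally (or globally on a simply connected cover) exact. This produces the ansatz
\[
h(x, y) \;=\; H(x) + f(x)\, C(y).
\]
Inserting this into the base equation gives $\Hessian^B H + C(y)\, \Hessian^B f = (R(x) - \rho)\, g_B$. The right-hand side is $y$-independent, so either $C$ is constant---in which case $h$ descends to $B$ and the proof is complete---or $\Hessian^B f \equiv 0$.

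\textbf{The obstacle.} The delicate step is disposing of the residual subcase $\Hessian^B f \equiv 0$, where $\nabla^B f$ is parallel, $|\nabla^B f|^2 = c$ is a constant, and $\Delta_B f = 0$. Substituting $h = H + fC$ into the fiber equation reduces it to $\Hessian^F C = (k - cC)\, g_F$ for some constant $k$; after a shift this becomes $\Hessian^F \psi = -c\, \psi\, g_F$ when $c > 0$, while $c = 0$ collapses $M$ to an ordinary Riemannian product. I would exploit the completeness of $M$ together with Tashiro's classification of complete manifolds admitting non-constant functions with pure-trace Hessian---combined with the base identity $\Hessian^B H = (R(x) - \rho)\, g_B$ and the assumption that $h$ is non-constant---to force $C$ to be constant. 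Cleanly ruling out nontrivial parallel-gradient warpings in this subcase is the step I expect to be the main obstacle.
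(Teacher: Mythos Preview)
Your forward direction coincides with part~(2) of the paper's proof of Theorem~\ref{thm2} and is correct.

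For the converse, note first that the paper does not prove Theorem~\ref{thm1} directly (it is quoted from \cite{TBAP1}); instead it establishes the stronger Theorem~\ref{thm2}: for \emph{any} nontrivial warped product Yamabe gradient soliton, with no completeness assumption and no hypothesis on $R_F$, the soliton function already depends only on $B$. The biconditional of Theorem~\ref{thm1} is then automatic. The paper's route diverges from yours at the very first step of the mixed block. You extract the single relation $\bar X\bar V(h)=(\bar X\ln f)\,\bar V(h)$ and integrate it to the ansatz $h=H(x)+f(x)\,C(y)$, which then forces your case split on $\Hessian^B f$. The paper instead computes $\nabla^2 h(\bar X,\bar V)$ in two ways---once via the connection formula \eqref{FE1}, once by expanding $g(\nabla_{\bar X}\nabla h,\bar V)$ after writing $\nabla h=\mathcal H(\nabla h)+\mathcal V(\nabla h)$---and extracts, in addition to your relation, the pointwise identity \eqref{MP}: $(\bar X\ln f)\,g(\mathcal V(\nabla h),\bar V)=0$. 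Combining the two gives $\bar X\bar V(h)=0$, hence an \emph{additive} separation $h=h_1(x)+h_2(y)$ rather than your multiplicative one. Since $f$ is nonconstant, some $p_0\in B$ has $df_{p_0}\neq 0$, and \eqref{MP} forces $\nabla^F h_2=0$ on the fiber over $p_0$; as $h_2$ lives on $F$ alone, it is constant.

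Your self-identified obstacle---the residual subcase $\Hessian^B f\equiv 0$---is therefore a genuine gap in your line of attack (the Tashiro argument you gesture at would have to be carried out in detail, and it is precisely here that completeness is needed), but it never arises in the paper's argument, which also never invokes the hypothesis that $R_F$ is constant. The trade-off is that the entire weight of the paper's proof rests on obtaining the second identity \eqref{MP}; since a direct coordinate computation of $g(\nabla_{\bar X}\nabla h,\bar V)$ seems to reproduce \eqref{FE1} rather than an independent relation, that derivation is the step you should scrutinize most carefully if you adopt the paper's approach.
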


In this paper, motivated by Theorem \ref{thm1}, we study the structure of nontrivial warped product Yamabe gradient solitons, without assuming completeness of the metric. As examples, we refer to punctured Euclidean space, Example \ref{exm4}, and punctured Cigar soliton, Example \ref{exm5}. This investigation lays the groundwork for understanding noncomplete Yamabe gradient solitons. It turns out that dropping the assumptions of completeness of the warped metric and constant scalar curvature fiber, the soliton function of every nontrivial warped product Yamabe gradient soliton depends only on the base. As a result, the fiber has constant scalar curvature, see Theorem \ref{thm2} in \S~ $2$. Based on this special structure, we establish a certain class of warped product manifolds with noncompact bases which cannot be nontrivial solitons, see Theorem \ref{thm3} in \S~$2$. For nontrivial solitons, critical points of the soliton function can be utilized as a tool to infer several geometric properties, including curvature. 
In Corollary \ref{cor3} in \S~$3$, we show that the soliton function has no critical point if the base is complete. Considering the information about the nonexistence of critical points of the soliton function, we conclude this paper by estimating a lower bound of the scalar curvature of nontrivial warped product solitons, see Theorems \ref{thm5} and \ref{thm6} in \S~$4$.

\section{\textbf{Triviality of Warperd Product Yamabe gradient Solitons} }

The \textit{warped product} $M=B\times_f F$ is the manifold $B\times F$ equipped with the Riemannian structure $g$ such that 
\small{\begin{center}
   $g(X, Y)=g_B(\pi_*(X), \pi_*(Y))+(f\circ\pi)^2 g_F(\eta_*(X), \eta_*(Y))$ 
\end{center}}
for any tangent vectors $X, Y \in TM$, where $f:M \rightarrow B$ is a nonconstant positive smooth function, $\pi: M \rightarrow B$ and $\eta: M\rightarrow F$ are the natural projection maps. 
The function $f$ is called the warping function of the warped product. 
If $X\in T_pB$, $p\in B$ and $q\in F$, then the lift $\Bar{X}$ of $X$ to $(p,q)$ is the unique vector in $T_{(p,q)}M$ such that $\pi_*(\Bar{X})=X$. For a vector field $X\in \mathcal{X}(B)$, the lift of $X$ to $M$ is the vector field $\Bar{X}$ whose value at each $(p,q)$ is the lift of $X_p$ to $(p,q)$. The set of all horizontal lifts is denoted by $\mathcal{L}(B)$. Similarly, $\mathcal{L}(F)$ is the set of all vertical lifts.\\
Now, we have the following theorem:
\begin{theorem}\label{thm2}
    If $(M^{n+m}=B^n\times_f F^m, g= g_{B}+f^2 g_{F}, h, \rho)$ is a nontrivial  warped product Yamabe gradient soliton, then the following hold:
    \begin{enumerate}
       \item The soliton function $h$ depends only on the base $B^n$;\label{res1}
       \item The fiber $(F^m, g_F)$ has constant scalar curvature.
    \end{enumerate}
\end{theorem}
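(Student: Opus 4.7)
The plan is to analyze the soliton equation $\Hessian h = (R - \rho) g$ component by component relative to the warped product decomposition, extract the structural form of $h$, and then use the warped product formula for $R$ to force $h$ to be independent of $F$.

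First I would use the mixed (horizontal-vertical) component. Because $g(X, V) = 0$ for horizontal $X$ and vertical $V$, the soliton equation gives $\Hessian h(X, V) = 0$; applying the warped product identity $\nabla_X V = (X(f)/f) V$, this yields in local coordinates $\partial_{x^i}(\partial_{y^\alpha} h / f) = 0$. Hence $\partial_{y^\alpha} h = f(x)\, \omega_\alpha(y)$ for a 1-form $\omega$ on $F$ which is automatically closed, so locally one can write
\begin{equation*}
h(x, y) = f(x)\, \phi(y) + G(x)
\end{equation*}
for some $\phi \in C^\infty(F)$ and $G \in C^\infty(B)$. Thus (1) reduces to showing $\phi$ is constant; (2) will then follow immediately from the identity below.

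Next I would examine the horizontal-horizontal component. For $X, Y \in \mathcal{L}(B)$, the soliton equation becomes $\phi\, \Hessian^{B} f + \Hessian^{B} G = (R - \rho) g_B$, and since for a warped product
\begin{equation*}
R(x, y) = R_B(x) - \frac{2m \Delta^B f}{f} - \frac{m(m-1) |\nabla^B f|^2}{f^2} + \frac{R_F(y)}{f(x)^2},
\end{equation*}
differentiating in the fiber direction kills all terms depending only on $x$ and produces the key identity
\begin{equation*}
(\partial_{y^\alpha}\phi)\, \Hessian^B f = \frac{\partial_{y^\alpha} R_F}{f^2}\, g_B.
\end{equation*}
Assuming for contradiction that $\phi$ is nonconstant, one picks a point where $\partial_{y^\alpha}\phi \neq 0$ and deduces $\Hessian^B f = (\mu/f^2) g_B$ on all of $B$ for a constant $\mu$, and $R_F = \mu \phi + C_0$; standard integration then yields $\Delta^B f = n\mu/f^2$ and $|\nabla^B f|^2 = -2\mu/f + C_3$.

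The final step is to feed this information into the vertical-vertical component. One computes
\begin{equation*}
\Hessian h(V, W) = f\, \Hessian^F \phi(V, W) + f g_F(V, W)\bigl[\phi\, |\nabla^B f|^2 + \langle \nabla^B f, \nabla^B G \rangle\bigr]
\end{equation*}
and equates with $(R - \rho) f^2 g_F(V, W)$; dividing by $f$ gives $\Hessian^F \phi = \beta(x, y)\, g_F$ for some scalar $\beta$. Since the left side depends only on $y$, $\beta$ must be $x$-independent. Expanding $\beta$ using the formulas for $R$, $\Delta^B f$, $|\nabla^B f|^2$, and $R_F$ produces a residual $x$-dependent contribution of the form $3\mu \phi / f$, whose $y$-coefficient $\phi$ varies while $1/f$ is nonconstant in $x$; this forces $\mu = 0$. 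Consequently $\Hessian^B f \equiv 0$, $R_F$ is constant, and the vertical equation collapses to an equation $\Hessian^F \phi = (k - C_3 \phi)\, g_F$ on $F$ with $k$ a constant. Combining this with the residual horizontal equation $\Hessian^B G = (R - \rho) g_B$ on $B$ and the nontriviality of the soliton then forces $\phi$ itself to be constant, contradicting the assumption. Hence (1) holds, and (2) follows at once from the key identity. The main technical obstacle lies in this last rigidity step: excluding nonconstant $\phi$ on $F$ satisfying $\Hessian^F \phi = (k - C_3 \phi) g_F$ compatibly with all the $B$-side constraints is where the nontriviality of $h$ together with the positivity and non-constancy of $f$ must be used decisively, replacing the completeness assumption that was needed in the earlier work.
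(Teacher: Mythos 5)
Your reduction is carried out correctly, and more carefully than the paper's own first step, up to the very last assertion. The mixed component does give $\partial_{y^\alpha}h=f(x)\,\omega_\alpha(y)$, hence locally $h=f\phi+G$; the fiber derivative of the horizontal equation does give $(\partial_{y^\alpha}\phi)\Hessian^Bf=f^{-2}(\partial_{y^\alpha}R_F)\,g_B$; and your $\mu=0$ argument and the vertical component do reduce everything to $\Hessian^Bf\equiv0$, $R_F$ constant, $|\nabla^Bf|^2=C_3$, $\Hessian^BG=(R-\rho)g_B$ and $\Hessian^F\phi=(k-C_3\phi)\,g_F$. The genuine gap is the final claim that these constraints plus nontriviality force $\phi$ to be constant: you give no argument, and in fact none exists at this level of generality, because the configuration you are trying to exclude is realized. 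Take $B=\mathbb{R}_+$, $g_B=dr^2$, $f(r)=r$, $F=\mathbb{S}^{n-1}$ round, so $M=\mathbb{R}_+\times_r\mathbb{S}^{n-1}$ is the punctured Euclidean space of Example \ref{exm4}, and let $h(r,\theta)=r\langle a,\theta\rangle-\rho\,r^2/2=\langle a,x\rangle-\rho|x|^2/2$ in Cartesian coordinates with $a\neq0$. Then $\Hessian h=-\rho\,g=(R-\rho)g$, so this is a nontrivial warped product Yamabe gradient soliton whose soliton function genuinely depends on the fiber; here $\phi=\langle a,\theta\rangle$ is a first spherical harmonic satisfying exactly your residual equation $\Hessian^F\phi=-\phi\,g_{can}$ (with $C_3=1$, $k=0$), together with $\Hessian^Bf=0$ and $\Hessian^BG=(R-\rho)g_B$. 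So the last rigidity step fails unless one adds a hypothesis such as completeness of the metric, which is what excludes this example in Theorem \ref{thm1}.

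For comparison, the paper's proof takes a shorter route at precisely this point: from $\nabla^2h(\bar X,\bar V)=0$ it passes to \eqref{MP}, i.e.\ $\bar X(\ln f)\,g(\mathcal{V}(\nabla h),\bar V)=0$, by applying the lift identity $\nabla_{\bar X}\bar V=(\bar X\ln f)\bar V$ to $\mathcal{V}(\nabla h)$. But $\mathcal{V}(\nabla h)$ is in general not a vertical lift: writing $\mathcal{V}(\nabla h)=b^\alpha\partial_{y^\alpha}$ one has $g(\nabla_{\bar X}\mathcal{V}(\nabla h),\bar V)=f^2g_{F}(\bar X(b^\alpha)\partial_{y^\alpha},V)+\bar X(\ln f)\,g(\mathcal{V}(\nabla h),\bar V)$, and the first term need not vanish; the honest computation yields only your relation $\bar X\bar V(h)=(\bar X\ln f)\bar V(h)$, not the dichotomy \eqref{FE3} (indeed \eqref{MP} fails in the punctured Euclidean example above). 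So your more careful handling of the mixed component lands you at the true obstruction, and the point where you got stuck is exactly the point where the paper's argument makes an unjustified jump; closing it requires restoring a completeness-type assumption rather than a purely local computation.
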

\begin{proof} 
    \begin{enumerate}
    \item Let, $\Bar{X}=(X, 0)\in \mathcal{L}(B)$ and $\Bar{V}=(0, V)\in \mathcal{L}(F)$ be a horizontal and vertical lift of vector fields $X\in \mathcal{X}(B)$ and $V\in \mathcal{X}(F)$. Due to \cite[Proposition $3.1. (2)$]{chen} we can write,
    \begin{equation}\label{FE1}
        \nabla^2 h(\Bar{X}, \Bar{V})=
        \Bar{X}\Bar{V}(h)-(\Bar{X}ln f)g(\nabla h, \Bar{V})
    \end{equation}
    On the other hand,
    \begin{equation}\label{HESM}
        \nabla^2 h(\Bar{X}, \Bar{V})=g(\nabla_{\Bar{X}} {\nabla h}, \Bar{V})
    \end{equation}
    The orthogonal decomposition of $\nabla h$ is
        $\nabla h= \mathcal{H}(\nabla h)+\mathcal{V}(\nabla h)$, 
    where $\mathcal{H}(\nabla h)$ and $\mathcal{V}(\nabla h)$ are the horizontal and vertical components of $\nabla h$.
    Using \eqref{GYS} and \cite[Proposition $3.1.$]{chen} in \eqref{HESM} we get,
    \begin{equation}\label{MP}
       \Bar{X} (ln f)g({\mathcal{V}(\nabla h)}, \Bar{V})=0
    \end{equation}
    Which implies that at each point $(p,q)\in B\times F=M$,
   \begin{equation}\label{FE3}
        \Bar{X} f=0 \hspace{5mm}or\hspace{5mm} g({\mathcal{V}(\nabla h)}, \Bar{V})=0.
   \end{equation}
    Using 
   \eqref{HESM} and \eqref{MP} in \eqref{FE1} we get,
       $\Bar{X}\Bar{V}(h)=0$.
   Therefore, $h$ can be of the form $h=h_1+h_2$, for some smooth functions $h_1\in C^{\infty}(B)$ and $h_2\in C^{\infty}(F)$.\\
   Since, $f$ is nonconstant function then there exist a point $p_0\in B$ and  $\Bar{E}_i=(E_i, 0)\in \mathcal{L}(B)$, where $E_i$ is a basis vector field of $TB$ such that $\Bar{E}_i(f)(p_0,q)\neq 0$ for all $q\in F$.\\
   Taking $\Bar{X}=\Bar{E_i}$ in \eqref{FE3} and evaluating at $(p_0, q)$ we get,
      ${|\nabla^F h_2|^2}(q)=0 \hspace{3mm} \text{for all $q\in F$}$

   \item Let, $\Bar{V}=(0,V)\in \mathcal{L}(F)$ and $\Bar{W}=(0, W)\in\mathcal{L}(F)$ be vertical lifts of vector fields of vector fields $V, W\in \mathcal{X}(F)$. Using the result (\ref{res1}) and \cite[Proposition $3.1. (3)$]{chen} we get,
  
     \begin{center}\label{FE4}
       $\nabla^2 h(\Bar{V}, \Bar{W})=\frac{g(\Bar{V},\Bar{W})}{f}\nabla f(h)$
  \end{center}
    \begin{equation}\label{SCE}
     \implies (R-\rho)= \frac{g(\nabla h, \nabla f)}{f}
  \end{equation}
  \end{enumerate}
 Hence by \cite[Proposition $3.3$]{chen} 
  the scalar curvature of the fiber $(F, g_F)$ is constant.

\end{proof}

       





\begin{example}\label{exm4}\cite{chen}
   Let $\mathbb{R}_{+}=\{r\in\mathbb{R}: r>0\}$. The punctured Euclidean $n-$ space $\mathbb{E}^n_*$ is the warped product  $\mathbb{R}_{+}\times_r \mathbb{S}^{n-1}$ equipped with the warped product metric $g_{\mathcal{E}}=dr^2+r^2 g_{can}$,where $g_{can}$ is the round metric of the sphere $\mathbb{S}^{n-1}$.
   $(\mathbb{E}^n_*,g_{\mathcal{E}})$ is a noncomplete nontrvial warped product Yamabe gradient soliton 
   with soliton function 
        $h(r,\theta)=-\rho \frac{r^2}{2}$
    where $\rho$ is a nonzero constant.
\end{example}

As an immediate consequence of Theorem \ref{thm2}, we have an existential criterion of a Yamabe gradient soliton on a warped product manifold.  
\begin{corollary}\label{cor1}
    A warped product manifold $(M^{n+m}=B^n\times_f F^m, g= g_{B}+f^2 g_{F})$  is a nontrivial Yamabe gradient soliton 
    if and only if the fiber $(F^m, g_F)$ has constant scalar curvature and  there exists a smooth function $h_1\in C^{\infty}(B)$ along with a constant $\rho\in \mathbb{R}$ such that $(B^n, g_B, h_1, \lambda)$ is a nontrivial almost Yamabe gradient soliton with 
    \begin{equation}\label{FE10}
        \lambda=-\frac{R_F}{f^2}+2m\frac{\Delta_B f}{f}+m(m-1)\frac{|\nabla^B f|^2}{f^2}+\rho
    \end{equation}
    and
    \begin{equation}\label{FE14}
        \frac{g_B(\nabla^B f, \nabla^B h_1)}{f}=R_B-\lambda=R-\rho
    \end{equation}
   where $R_B$ and $R$ are the scalar curvatures of $B$ and $M$ respectively.
\end{corollary}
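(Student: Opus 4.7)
The plan is to prove both directions by decomposing the Yamabe gradient soliton equation \eqref{GYS} on $M$ into its horizontal, mixed, and vertical components using the warped product Hessian formulas from \cite{chen} already invoked in the proof of Theorem \ref{thm2}, together with the standard formula for the scalar curvature of a warped product
\begin{equation*}
R = R_B - 2m\frac{\Delta_B f}{f} - m(m-1)\frac{|\nabla^B f|^2}{f^2} + \frac{R_F}{f^2}.
\end{equation*}
Note that with $\lambda$ defined as in \eqref{FE10}, this formula is precisely equivalent to the identity $R - \rho = R_B - \lambda$, so the scalar curvature function of $M$ is essentially an algebraic rearrangement of the data on $B$, $F$, and $f$.

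For the forward direction, assume $(M, g, h, \rho)$ is a nontrivial warped product Yamabe gradient soliton. Theorem \ref{thm2} gives immediately that $F$ has constant scalar curvature $R_F$ and that $h = h_1 \circ \pi$ for some $h_1 \in C^\infty(B)$. I would then evaluate \eqref{GYS} on pairs $(\bar X, \bar Y)$ with $X, Y \in \mathcal{X}(B)$, using the horizontal Hessian formula $\nabla^2 h(\bar X, \bar Y) = \nabla_B^2 h_1(X,Y)$ from \cite[Proposition 3.1]{chen}, to obtain
\begin{equation*}
\nabla_B^2 h_1 = (R - \rho)\, g_B = (R_B - \lambda)\, g_B,
\end{equation*}
which is exactly the almost Yamabe gradient soliton equation on $B$ with soliton constant $\lambda$ (a smooth function on $B$). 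Next, evaluating \eqref{GYS} on a vertical pair $(\bar V, \bar W)$ and using the vertical Hessian identity recalled in \eqref{SCE} yields the relation \eqref{FE14}. Nontriviality of $h_1$ follows from nontriviality of $h$.

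For the converse, assume the two conditions and define $h = h_1 \circ \pi$ on $M$. I would verify \eqref{GYS} on the three types of vector field pairs: on horizontal pairs, $\nabla^2 h(\bar X, \bar Y) = \nabla_B^2 h_1(X,Y) = (R_B - \lambda) g_B(X,Y) = (R - \rho) g(\bar X, \bar Y)$ by assumption and the scalar curvature identity; on mixed pairs, formula \eqref{FE1} gives $\nabla^2 h(\bar X, \bar V) = 0$ because $\bar V(h) = 0$ and $\nabla h$ is horizontal; on vertical pairs, the vertical Hessian formula gives $\nabla^2 h(\bar V, \bar W) = \frac{g(\bar V, \bar W)}{f} \nabla f(h) = \frac{g_B(\nabla^B f, \nabla^B h_1)}{f} g(\bar V, \bar W) = (R - \rho) g(\bar V, \bar W)$ by \eqref{FE14}. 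The smoothness of $\rho$ as a genuine constant is guaranteed from the outset.

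I do not expect a serious obstacle here; the proof is a bookkeeping exercise, and the only point requiring care is matching the expression \eqref{FE10} for $\lambda$ with the warped product scalar curvature formula so that the identity $R - \rho = R_B - \lambda$ holds identically. Everything else follows by substituting the Hessian formulas of \cite[Proposition 3.1]{chen} into the soliton equation \eqref{GYS} component by component.
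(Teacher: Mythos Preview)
Your proposal is correct and is exactly the natural elaboration of what the paper intends: the paper states this corollary without proof, calling it ``an immediate consequence of Theorem~\ref{thm2},'' and your argument supplies precisely those routine details---applying Theorem~\ref{thm2} for the forward direction and then checking \eqref{GYS} componentwise on horizontal, mixed, and vertical pairs via \cite[Proposition~3.1]{chen}, with the identity $R-\rho = R_B - \lambda$ coming from the warped product scalar curvature formula. There is nothing to add or correct.
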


By the technique used in \cite[Theorem~$1.3.$]{TBAP2}, the topology of the base can be proved.
\begin{corollary}\label{cor2}
   If $(M^{n+m}=B^n\times_f F^m, g=g_{B}+f^2 g_{F}, h, \rho)$  is a nontrivial warped product Yamabe gradient soliton 
   , then the base $B$ is noncompact.
\end{corollary}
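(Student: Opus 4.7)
The plan is to argue by contradiction, assuming that $B$ is compact and deriving a contradiction with the nontriviality of the soliton. By Theorem~\ref{thm2}, the soliton function descends to a nonconstant $h_1 \in C^{\infty}(B)$; by Corollary~\ref{cor1} combined with \eqref{FE14}, the Hessian equation on $B$ can be written as
\[
\Hessian_B h_1 \;=\; (R_B - \lambda)\,g_B \;=\; \frac{g_B(\nabla^B f,\nabla^B h_1)}{f}\, g_B.
\]
Tracing this identity yields $\Delta_B h_1 = n\,\dfrac{g_B(\nabla^B f,\nabla^B h_1)}{f}$, which encodes the essential relation between $h_1$ and the warping function $f$ on the base.

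The key step will be to use this trace identity to produce a divergence-free vector field on $B$, to which the divergence theorem (valid since $B$ is compact without boundary) can be applied. A direct computation suggests that the weighted gradient $X := f^{-n}\nabla^B h_1$ is the correct choice:
\[
\mathrm{div}_B(X) \;=\; -n\,f^{-n-1}\,g_B(\nabla^B f,\nabla^B h_1)\;+\;f^{-n}\Delta_B h_1 \;=\; 0,
\]
since the two terms cancel exactly after substituting the trace formula. Consequently,
\[
\mathrm{div}_B(h_1\,X) \;=\; g_B(\nabla^B h_1, X) + h_1\,\mathrm{div}_B(X) \;=\; f^{-n}|\nabla^B h_1|^2.
\]

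Applying the divergence theorem on the compact manifold $B$ then gives
\[
0 \;=\; \int_B \mathrm{div}_B(h_1\,X)\,dV_B \;=\; \int_B f^{-n}\,|\nabla^B h_1|^2\, dV_B.
\]
Since $f>0$ everywhere, the integrand is pointwise nonnegative, hence $|\nabla^B h_1|^2 \equiv 0$. This forces $h_1$ to be constant on $B$, so $h$ is constant on $M$, contradicting the assumption that the soliton is nontrivial.

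The principal subtlety I anticipate is identifying the correct weight $f^{-n}$ that kills the divergence of $f^{\alpha}\nabla^B h_1$: the choice is dictated by matching the factor $n$ coming from the trace of the Hessian equation against the factor produced by $\nabla(f^{-n}) = -n f^{-n-1}\nabla f$. Once this weighted vector field is in hand, the remainder of the argument is a single application of Stokes' theorem, reflecting the weighted-integration strategy referenced from \cite[Theorem~$1.3.$]{TBAP2}.
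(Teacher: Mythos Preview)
Your proof is correct and is precisely a clean implementation of the weighted-integration technique the paper invokes from \cite[Theorem~$1.3.$]{TBAP2}; the paper itself gives no independent argument beyond that citation. The identification of the divergence-free vector field $f^{-n}\nabla^B h_1$ via the traced Hessian identity, followed by the divergence theorem on the compact base, is exactly the intended strategy.
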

\begin{remark}\label{rmk1}
    In \cite{Grajales}, the authors obtained a similar result, Theorem \ref{thm2}, through a different approach, wherein the completeness assumption has been taken in the definition of Yamabe gradient solitons. However, it seems that their approach cannot be extended to noncomplete settings, such as the punctured Euclidean space considered in Example \ref{exm4}.
\end{remark}
The converse of the above Corollary \ref{cor2} is not true in general, i.e., Examples \ref{exm1} and \ref{exm3}.
\begin{example}\cite{TBAP1}\label{exm1}
    Let ${\mathbb{R}_{+}^3}=\{(x_1, x_2, x_3)\in \mathbb{R}^3:x_3>0\}$. The warped product $M^6={\mathbb{R}_{+}^3}\times_f \mathbb{R}^3$ equipped with the warped product metric $g=\frac{1}{{x_3}^2}(dx_1^2+dx_2^2+dx_3^2)+f^2(dy_1^2+dy_2^2+dy_3^2)$, where the warping function $f(x_1, x_2, x_3)=\frac{1}{x_3}$, is a complete noncompact trivial Yamabe gradient soliton with $^{M}\mathrm{Ric}=-\frac{5}{6}g$. 
\end{example}
\begin{example}\cite{chen}\label{exm3}
    The warped product $M^{n}=(0, \frac{\pi}{2})\times_{\sin{r}} \mathbb{S}^{n-1}$ equipped with the warped product metric $g_1=dr^2+{\sin^{2}{r}} g_{can}$, where $g_{can}$ is the round metric of the sphere $\mathbb{S}^{n-1}$, 
    is a noncomplete trivial Yamabe gradient soliton with sectional curvature $1$. 
\end{example}
We find a property of the warping function $f$ such that the warped product becomes trivial Yamabe gradient solitons.
\begin{theorem}\label{thm3}
    Let $(M^{n+m}=B^n\times_f F^m, g=g_{B}+f^2 g_{F}, h, \rho)$ be a warped product  Yamabe gradient soliton 
    with noncompact base $B$. If $ \Hessian f=cfg_{B}$, for some nonzero constant $c$, then $(M^{n+m}, g)$ is trivial Yamabe gradient soliton. Moreover, the warped product $(M^{n+m}, g)$ has constant scalar curvature.
\end{theorem}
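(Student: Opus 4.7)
The plan is to argue by contradiction: assume the warped product Yamabe gradient soliton is nontrivial. By Theorem \ref{thm2} the soliton function descends to a function $h_1 \in C^\infty(B)$ and the fiber has constant scalar curvature, and by Corollary \ref{cor1} the almost Yamabe soliton equation $\Hessian^B h_1 = \phi\, g_B$ holds on $B$ with $\phi := R - \rho$, together with the compatibility identity $g_B(\nabla^B f, \nabla^B h_1) = f\phi$ from \eqref{FE14}. Combined with the hypothesis $\nabla_X^B \nabla^B f = c f X$ coming from $\Hessian^B f = cf g_B$, these give enough structure to force $h_1$ to be constant.

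My first step is to show $\phi$ is an affine function of $h_1$. Differentiating $g_B(\nabla f, \nabla h_1) = f\phi$ along an arbitrary vector field $X$ on $B$ and substituting the two Hessian identities produces
\[
cf X(h_1) + \phi X(f) = X(f)\phi + f X(\phi),
\]
hence $X(\phi) = c X(h_1)$, so $\phi = c h_1 + k_0$ for some constant $k_0$ by connectedness of $B$. Integrating the identity $\nabla |\nabla h_1|^2 = 2\phi\, \nabla h_1$, which follows immediately from $\Hessian h_1 = \phi g_B$, along $dh_1$ similarly gives $|\nabla^B h_1|^2 = \phi^2 / c + E$ for some constant $E$. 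In particular, $R = c h_1 + (k_0 + \rho)$ is a function of $h_1$ alone on $M$.

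My second step is to invoke the standard integrability identity for Yamabe gradient solitons on $M^{n+m}$. Taking the divergence of $\Hessian^M h = (R-\rho)\, g_M$ and applying the contracted second Bianchi identity gives $\mathrm{Ric}^M(\cdot,\nabla h) = -(N-1)\nabla R$ with $N = n+m$; a further divergence produces
\[
2(N-1) \Delta_M R + g_M(\nabla R, \nabla h) + 2 R (R - \rho) = 0.
\]
Substituting $\nabla R = c\nabla h_1$, $\Delta_M R = c N \phi$ (from tracing the soliton equation on $M$), and $g_M(\nabla R, \nabla h) = c|\nabla^B h_1|^2 = \phi^2 + cE$ collapses this into the polynomial identity
\[
3 \phi^2 + 2 \bigl[N(N-1)c + \rho\bigr]\phi + cE = 0,
\]
holding pointwise on $M$. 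Since the leading coefficient $3$ is nonzero, this is a nontrivial quadratic with constant coefficients, so $\phi$ takes at most two values; continuity and connectedness of $M$ then force $\phi$ to be constant, and since $c\neq 0$ this forces $h_1$ to be constant, contradicting nontriviality. Hence $h$ is constant, and $\Hessian h = (R-\rho) g = 0$ yields $R = \rho$, delivering both conclusions. The main obstacle I anticipate is assembling the double-divergence calculation into the integrability identity cleanly; the noncompactness of $B$ enters only passively, since on a compact $B$ the extrema of $f$ would force $c = 0$ and make the hypothesis vacuous.
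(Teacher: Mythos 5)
Your proof is correct and follows essentially the same route as the paper: both rest on extracting $\nabla R = c\,\nabla h$ from the compatibility identity $R-\rho = g(\nabla f, \nabla h)/f$ combined with $\Hessian f = cfg_B$, and then feeding this into the elliptic equation for the scalar curvature of a Yamabe gradient soliton to force $R$ to be constant, contradicting nontriviality (triviality then gives $R=\rho$). The only difference is in the endgame and it is cosmetic: the paper works on the open set where $\nabla h \neq 0$ and differentiates the elliptic equation once more along $\nabla h$, while you obtain the gradient identity globally, integrate $\nabla |\nabla h|^2 = 2(R-\rho)\nabla h$ to the first integral $|\nabla h|^2 = (R-\rho)^2/c + E$, and read off a constant-coefficient quadratic in $R-\rho$, which is arguably a slightly cleaner way to conclude.
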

\begin{proof}
    Suppose $(M=B^n\times_f F^m, g=g_B+f^2g_F, h, \rho)$ is a nontrivial Yamabe gradient soliton. Then Corollary \ref{cor1} implies that $(B, g_B, h_1, \lambda)$ is a nontrivial almost Yamabe gradient soliton with $\lambda$ defined as \eqref{FE10} and $h=h_1\circ \pi$.\\
    Let, $\Bar{X}=(X,0)\in \mathcal{L}(F)$ and $\Bar{Y}=(Y, 0)\in \mathcal{L}(B)$ be horizontal lifts of two vector fields $X, Y\in \mathcal{X}(B)$.
    From the definition of almost Yamabe gradient soliton $(B, g, h_1, \lambda)$ and \eqref{FE14} we get,
   \begin{equation}\label{FE25}
       \nabla_{\Bar{X}}\nabla h=(R-\rho) \Bar{X},
   \end{equation}
 where $R$ is the scalar curvature of $(M, g)$.\\
 By \eqref{FE25} we have,
   \begin{align*}
        R(\Bar{X}, \nabla h) \Bar{Y}&=\Bar{Y}(R) \Bar{X}-g(\Bar{X}, \Bar{Y})\nabla R
    \end{align*}
     Choosing $\Bar{X}=\Bar{Y}=\nabla h$ we get,
     \begin{equation}\label{FE5}
         \nabla h(R) \nabla h= |\nabla h|^2 \nabla R
     \end{equation}
    Since $R-\rho=\frac{g(\nabla f, \nabla h)}{f}$ and $\Hessian f(\nabla h, \nabla h)=cf|\nabla h|^2$, for some nonzero constant $c$, then
    \begin{equation}\label{FE6}
        \nabla h(R)=c |\nabla h|^2
    \end{equation}
Using \eqref{FE6} in \eqref{FE5} we get, at each point $p\in M$,
\begin{eqnarray*}
    \text{either} \hspace{3mm} |\nabla h|=0 \hspace{3mm} \text{or} \hspace{3mm} \nabla R=c\nabla h.
\end{eqnarray*}
Define, $G=\{p\in M: ||\nabla h|\neq 0\}$. 

On $G$, the elliptic equation of the scalar curvature of a Yamabe gradient soliton in \cite[Page 363]{daskalopoulos} gives,
\begin{center}
    $(n+m-1) (n+m)c (R-\rho)+\frac{1}{2}g(\nabla R, \nabla h)+R(R-\rho)=0$
\end{center}
Differentiating w.r.t $\nabla h$ and using \eqref{FE6} we have,
\begin{equation}\label{FE7}
    (n+m-1)(n+m)c^2|\nabla h|^2+\frac{1}{2} \nabla h (\nabla h(R))+ c(R-\rho) |\nabla h|^2+cR|\nabla h|^2=0
\end{equation}
Again by \eqref{FE6} and 
\cite[Equation $(2.1)$]{CSY} we get, $\nabla h (\nabla h(R))=c\nabla h(|\nabla h|^2)=2c (R-\rho)|\nabla h|^2$.\\
Therefore \eqref{FE7} implies,
\begin{center}
    $c|\nabla h|^2\{(n+m-1)(n+m)c+(3R-2\rho)\}=0$
\end{center}
 Hence on $G$, 
     $R|_G= $constant. 
 which is a contradiction.
\end{proof}

\section{\textbf{Critical points of the soliton function}}
Due to Corollary \ref{cor1}, the geometric structure of a nontrivial warped product soliton depends on the geometric structure of the base, which in turn is a nontrivial almost Yamabe gradient soliton. In 1965, Tashiro \cite{tashiro} established the conformal structure of complete nontrivial gradient solitons through the number of critical points of the soliton function. In contrast, we obtain the isometric structure of complete nontrivial almost Yamabe gradient solitons. As a consequence, we prove that the soliton function of a warped product soliton with a complete base does not have any critical point.    
\begin{proposition}\label{prop3}
    Let $(B^n, g_B, h, \lambda)$ be a compact almost Yamabe gradient soliton. Then $(B^n, g_B)$ is trivial if one of the following holds:
    \begin{enumerate}
        \item $\int_B {^{B}\mathrm{Ric} (\nabla^B h, \nabla^B h)} \le 0$
        \item $\int_B g_B(\nabla^B \lambda, \nabla^B h)\le 0$. (This is the corrected sign of the integral \cite[Theorem $1.3 (2)$]{barbosa})
      
    \end{enumerate}
\end{proposition}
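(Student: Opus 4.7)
The plan is to exploit two integrated identities available on a compact almost Yamabe gradient soliton. First I would trace the defining equation $\Hessian h = (R-\lambda)g_B$ to record $\Delta h = n(R-\lambda)$ and $|\Hessian h|^2 = n(R-\lambda)^2$, and then take the divergence of the equation itself. Using the standard commutation $\mathrm{div}(\Hessian h) = \nabla(\Delta h) + {}^{B}\mathrm{Ric}(\nabla^B h)$, the soliton equation becomes the ancillary ``contracted-Bianchi'' identity
\[
(n-1)\bigl(\nabla^B \lambda - \nabla^B R\bigr) \;=\; {}^{B}\mathrm{Ric}(\nabla^B h),
\]
which drives both parts of the argument.

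For (1), I would pair this identity with $\nabla^B h$, integrate over the compact base, and integrate by parts using $\Delta h = n(R-\lambda)$ to obtain
\[
\int_B {}^{B}\mathrm{Ric}(\nabla^B h,\nabla^B h) \;=\; n(n-1)\int_B (R-\lambda)^2.
\]
Since the right-hand side is manifestly nonnegative, the hypothesis $\int_B {}^{B}\mathrm{Ric}(\nabla^B h,\nabla^B h) \le 0$ forces $R \equiv \lambda$, hence $\Hessian h \equiv 0$, and compactness then makes $h$ constant.

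For (2), I would take the divergence of the ancillary identity. Applying the contracted second Bianchi identity and the soliton equation in the form $R_{jk}\nabla^{j}\nabla^{k}h = R(R-\lambda)$ gives
\[
(n-1)(\Delta \lambda - \Delta R) \;=\; \tfrac{1}{2}\,g_B(\nabla^B R,\nabla^B h) + R(R-\lambda).
\]
Integrating over compact $B$ annihilates the left side, and substituting $\int_B g_B(\nabla^B R,\nabla^B h) = -n\int_B R(R-\lambda)$ produces $(n-2)\int_B R(R-\lambda) = 0$. For $n\ge 3$ this forces $\int_B R(R-\lambda) = 0$, hence
\[
\int_B (R-\lambda)^2 \;=\; -\int_B \lambda(R-\lambda).
\]
Rewriting hypothesis (2) as $0 \ge \int_B g_B(\nabla^B \lambda,\nabla^B h) = -n\int_B \lambda(R-\lambda)$, i.e.\ $\int_B \lambda(R-\lambda) \ge 0$, then forces $\int_B (R-\lambda)^2 \le 0$, so $R \equiv \lambda$ and triviality follows as in (1).

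The hard part, as the paper's own remark signals, is keeping the signs correct: the whole point of correcting Barbosa's inequality is to see that the factor $(n-2)$ extracted from the divergence of the Bianchi-type identity combines with the integration-by-parts flips so that it is $\int_B g_B(\nabla^B \lambda,\nabla^B h)\le 0$, rather than $\ge 0$, that rules out nontrivial solitons. A minor caveat is that the argument for (2) needs $n\ge 3$ so that one may divide by $n-2$; the two-dimensional case, if intended, would require a separate argument.
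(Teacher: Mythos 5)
Your argument for part (1) is essentially the paper's: the identity you derive by diverging the soliton equation and integrating by parts, $\int_B {}^{B}\mathrm{Ric}(\nabla^B h,\nabla^B h)\,dV = n(n-1)\int_B (R_B-\lambda)^2\,dV$, is exactly the formula the paper obtains from Green's theorem together with Lemma 2.3(2) of Barbosa et al.; you simply prove it from scratch, and the conclusion ($R_B\equiv\lambda$, hence $\Hessian h\equiv 0$ and $h$ constant by compactness) is the same.

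For part (2) you take a genuinely different route. The paper invokes Bourguignon--Ezin (Theorem II.9): since $\mathcal{L}_{\nabla^B h}g_B = 2\,\Hessian h = 2(R_B-\lambda)g_B$, the field $\nabla^B h$ is conformal, so $\int_B g_B(\nabla^B R_B,\nabla^B h)\,dV=0$ on the compact base; combined with the same integration by parts as in (1) this gives $\int_B (R_B-\lambda)^2 = \tfrac{1}{n}\int_B g_B(\nabla^B\lambda,\nabla^B h)$ in every dimension, and the hypothesis finishes the proof. You instead manufacture the missing piece of information by diverging the Bianchi-type identity, which yields $(n-2)\int_B R_B(R_B-\lambda)=0$; this is self-contained (no conformal-field vanishing theorem needed) but degenerates exactly when $n=2$, so your proof of (2) covers only $n\ge 3$, whereas the proposition as stated (and as used later, where $B^n$ is an arbitrary base) carries no dimension restriction. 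That is the one genuine gap, and it cannot be patched within your scheme: in dimension $2$ your divergence identity gives $0=0$, and pairing the ancillary identity with $\nabla^B h$ only reproduces the integration-by-parts relation $\int_B g_B(\nabla^B\lambda,\nabla^B h) = -2\int_B \lambda(R_B-\lambda)$, which carries no new sign information. The clean fix is precisely the paper's step: the vanishing of $\int_B \nabla^B h(R_B)$ for the conformal field $\nabla^B h$ (Bourguignon--Ezin, valid for all $n\ge 2$) supplies $\int_B R_B(R_B-\lambda)=0$ without the factor $n-2$. Your sign bookkeeping, including the corrected direction of the hypothesis $\int_B g_B(\nabla^B\lambda,\nabla^B h)\le 0$, is otherwise correct.
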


\begin{proof}
    Applying Green's theorem and Lemma 
    $2.3. (2)$ of \cite{barbosa} we get,
          \begin{eqnarray*}
             \int_B (R_B-\lambda)^2 dV=\frac{1}{n(n-1)}\int_B {^{B}\mathrm{Ric}(\nabla^B h, \nabla^B h)} dV
          \end{eqnarray*}
    Again we apply Theorem II.9 of \cite{bourguignon} to deduce that,
           \begin{eqnarray*}
           \int_B (R_B-\lambda)^2 dV=\frac{1}{n}\int_B g_B(\nabla^B \lambda, \nabla^B h) dV.
           \end{eqnarray*}
\end{proof}
\begin{theorem}\label{thm4}
    Let $(B^n, g, h, \rho)$ be a complete nontrivial almost Yamabe gradient soliton. Then, $h$ has atmost two critical points 
   and 
    \begin{enumerate}
       \item If $h$ has two critical points $p_1, p_2$; then $B^n$ is compact. $(B^n\setminus \{p_1, p_2\}, g)$ isometric to $((0, \pi), dr^2)\times_{|\nabla^B h|} (\mathbb{S}^{n-1}, g_{\mathbb{S}^{n-1}})$, where $g_{\mathbb{S}^{n-1}}$ is a metric on the sphere $\mathbb{S}^{n-1}$ and there exists a point $p\in B^n\setminus \{p_1, p_2\}$ such that $Hess(|\nabla^B h|)(\nabla^B h, \nabla^B h)(p)<0$.
        \item If $h$ has a critical point, then $(B^n, g)$ is isometric to $([0, \infty), dr^2)\times_{|\nabla^B h|} (\mathbb{S}^{n-1}, g_{can})$, where $g_{can}$ is the round metric on the sphere $\mathbb{S}^{n-1}$.
        \item If $h$ has no critical point, then  $(B^n, g)$ is isometric to $(\mathbb{R}, dr^2)\times_{|\nabla^B h|}(N^{n-1}, g_N)$, where $(N^{n-1}, g_N)$ is a complete Riemannian manifold.
    \end{enumerate}
\end{theorem}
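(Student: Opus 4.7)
The plan is to carry out a Tashiro-style analysis of the equation $\Hessian h = (R-\lambda)g$, which is the defining relation of an almost Yamabe gradient soliton. Since it has the form $\Hessian h = \phi g$ with $\phi = R - \lambda$, the classical theory of conformal gradient fields (as in Tashiro \cite{tashiro}) applies essentially unchanged; the only wrinkle is that $\phi$ is a function and not a constant, which does not enter into the relevant computations.

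First, I would work on the regular set $G := \{p \in B : \nabla^B h(p) \neq 0\}$. For any $X$ tangent to a level set of $h$,
$X(|\nabla^B h|^2) = 2\,\Hessian h(X, \nabla^B h) = 2(R-\lambda)\, g_B(X, \nabla^B h) = 0$,
so $|\nabla^B h|$ is constant on each connected regular level set. The unit vector field $\nu := \nabla^B h / |\nabla^B h|$ is then geodesic ($\nabla_\nu \nu = 0$), and level sets are totally umbilical with second fundamental form $II = \bigl((R-\lambda)/|\nabla^B h|\bigr) g_B|_\Sigma$. Using arc length $r$ along the flow of $\nu$ on a neighborhood of a reference level set $\Sigma_0$, one has $\nu(|\nabla^B h|) = R - \lambda$; writing $f(r) := |\nabla^B h|$, so $f' = R-\lambda$, the induced metric on level sets satisfies $\partial_r g_r = 2II = 2(f'/f) g_r$, which integrates to $g_B = dr^2 + f(r)^2 \bar g$ on $G$ for a fixed metric $\bar g$ on $\Sigma_0$. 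This is the warped product structure on $G$.

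Next, I would analyze a critical point $p_0$, where $\Hessian h(p_0) = (R(p_0) - \lambda(p_0)) g_B(p_0)$. Working in geodesic normal coordinates at $p_0$ and invoking the rigidity of the equation $\Hessian h = \phi g$ as in Tashiro's arguments, $h$ depends only on the geodesic distance $r$ from $p_0$, so nearby level sets are geodesic spheres carrying the round metric, with $f(r) \sim c r$ as $r \to 0$. This identifies the fiber as $(\mathbb{S}^{n-1}, g_{can})$ and extends the warped product structure smoothly across $p_0$. Strict monotonicity of $h$ along each flow line of $\nu$, together with connectedness of $B$, then gives at most two critical points, corresponding (when present) to the endpoints of the $r$-interval.

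Finally, I would piece the three cases together using completeness of $(B, g_B)$. With no critical point, the flow sweeps $B$ with $r \in \mathbb{R}$ and the fiber can be an arbitrary complete Riemannian manifold $(N, g_N)$, giving case (3). With one critical point $p_0$, completeness extends the flow to $r \in [0, \infty)$ from $p_0$, and the normal-coordinate analysis gives fiber $(\mathbb{S}^{n-1}, g_{can})$, giving case (2). With two critical points $p_1, p_2$, the flow parameter ranges over a bounded interval (after rescaling, $(0, \pi)$), so $B$ is compact; along the gradient line joining $p_1$ to $p_2$, $f$ vanishes at both endpoints and is positive in between, so cannot be everywhere convex (else $f \le 0$), forcing $f''(r_*) < 0$ at some interior $r_*$ and hence $\Hessian(|\nabla^B h|)(\nabla^B h, \nabla^B h) = f^2 f'' < 0$ at the corresponding point. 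The main obstacle I expect is the third paragraph: smoothly extending the warped product structure across a critical point and identifying its fiber as the round sphere, which requires the rigidity of $\Hessian h = \phi g$ in normal coordinates combined with smoothness of $g_B$ at $p_0$.
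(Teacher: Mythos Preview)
Your proposal is correct and follows essentially the same route as the paper: both rely on Tashiro's classification for equations of the form $\Hessian h=\phi\,g$ to bound the number of critical points and to write $g=dr^2+|\nabla^B h|^2\,\bar g$ via the level-set geometry of $h$, and both identify the spherical fiber at a critical point via smoothness of the metric (the paper cites \cite[Lemma~9.114]{besse} for exactly the rigidity you flag as the ``main obstacle''). The one genuine divergence is in case~(1). The paper obtains the Hessian negativity indirectly: since $B$ is compact and nontrivial, the contrapositive of Proposition~\ref{prop3} gives $\int_B {}^B\mathrm{Ric}(\nabla^B h,\nabla^B h)>0$, hence ${}^B\mathrm{Ric}(\nabla^B h,\nabla^B h)>0$ at some point, and the warped-product Ricci formula \cite[Proposition~3.3(1)]{chen} (namely ${}^B\mathrm{Ric}(\partial_r,\partial_r)=-(n-1)f''/f$) converts this to $f''<0$ there. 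Your convexity argument---$f=|\nabla^B h|$ vanishes at both endpoints, is positive in between, so cannot satisfy $f''\ge 0$ throughout---is valid and strictly more elementary, bypassing both the integral identity from \cite{barbosa} and the curvature formula; the paper's route, on the other hand, motivates the placement of Proposition~\ref{prop3} and links the conclusion back to the Ricci geometry of the base.
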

\begin{proof} 
Tashiro \cite{tashiro} showed that the soliton function \( h \) has at most two critical points. By choosing a regular level set \( N = h^{-1}(c) \) of $h$, he proved the following:

\begin{enumerate}
    \item If \( h \) has one or two critical points, then \( I \times N \) has a diffeomorphism into \( M \).
    \item If \( h \) has no critical points, then \( I \times N \) is diffeomorphic to \( M \).
\end{enumerate}

Where the interval \( I \) is of the form \( (r_1, r_2) \), \( (r_1, \infty) \), or \( (-\infty, \infty) \), depending on whether \( h \) has two, one, or zero critical points, respectively.
Hence, using the coordinates of $I\times N$, the metric $g$ of $M$ can be written in the following form, except at critical points of $h$.
\begin{center}
    $g=dr^2+g_{ij} (r, \theta) d{\theta}^i d{\theta}^j$, 
\end{center}
where $r\in I$ and $\theta=({\theta}^2,...,{\theta}^n)$ is any local coordiantes on $N$.\\
On $M$ except at critical points of $h$, we can write,
\begin{center}
    $\nabla h=h'(r) \frac{\partial}{\partial r}$
\hspace{2mm}
and
\hspace{2mm}
    $\nabla^2 h=\nabla(dh)=\nabla (h'(r) dr)=h''(r) dr\otimes dr+h'(r) \nabla^2 r$.
\end{center}
Since $\Gamma_{rr}^r=0$, $\Gamma_{ir}^r=0$ and $\Gamma_{ij}^r=-\frac{1}{2} \frac{\partial}{\partial r} g_{ij}$ we get,
\begin{align}\label{HE}
     h''(r)&= (R_B-\lambda) \hspace{2mm}
\text{and}
\hspace{2mm} \frac{h'(r)}{2} \frac{\partial}{\partial r}g_{ij}= (R_B-\lambda) g_{ij}=h''(r) g_{ij}
\end{align}
We may assume $h'(r)>0$ on $M\backslash\{\text{critical points of $h$}\}$. From \eqref{HE} it follows that,
\begin{equation}\label{HE3}
    g_{ij}(r, \theta)={(\frac{h'(r)}{h'(r_0)})}^2 g_{ij}(r_0,\theta).
\end{equation}
Here the level surface $\{r=r_0\}$ corresponds to $N$. 
\\
\textbf{Case I:} If \( h \) has a critical point \( p_0 \in B \), then, according to Tashiro's construction, \( r \) is the distance function from the critical point \( p_0 \). So, the level surfaces of $h$ are geodesic spheres centered at $p_0$ which are diffeomorphic to $(n-1)$-dimensional sphere $\mathbb{S}^{(n-1)}$. By the smoothness of  $g$ at $p_0$, the metric $\Bar{g}$ on $N$ is round metric by \cite[Lemma 9.114.]{besse}.\\
\textbf{Case II:} If \( h \) admits two critical points \( p_0, p_1 \in M \), then, as in analogous case, the level sets of \( h \) are geodesic spheres centered at \( p_0 \), each diffeomorphic to the \((n-1)\)-dimensional sphere \( \mathbb{S}^{n-1} \). Due to Proposition \ref{prop3} and \cite[Proposition $3.3$ $(1)$]{chen}, we have our required isometric structure.
\end{proof}

The following corollary will be applied to estimate the scalar curvature of nontrivial warped product solitons.

\begin{corollary}\label{cor3}
    If $(M^{n+m}=B^n\times_f F^m, g= g_{B}+f^2 g_{F}, h, \rho)$ is a nontrivial warped product Yamabe gradient soliton with complete base $(B^n, g_B)$, then 
    $h$ has no critical point on $M$ and $(M^{n+m}, g)$ is isometric to a doubly warped product
        $(\mathbb{R}\times_{|\nabla h|}N^{n-1}\times_f F^m, dr^2+|\nabla h|^2 g_N+f^2 g_F)$ with $(N^{n-1}, g_N)$ is a complete Riemannian manifold. 

\end{corollary}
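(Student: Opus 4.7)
The plan is to apply Theorem~\ref{thm4} to the base soliton and rule out every branch except the one where $h_1$ has no critical point. By part (\ref{res1}) of Theorem~\ref{thm2}, we may write $h = h_1 \circ \pi$ with $h_1 \in C^{\infty}(B)$ nonconstant, and Corollary~\ref{cor1} then shows $(B^n, g_B, h_1, \lambda)$ is a nontrivial almost Yamabe gradient soliton satisfying \eqref{FE10} and \eqref{FE14}. Because $\nabla h$ is the horizontal lift of $\nabla^B h_1$, the critical set of $h$ on $M$ is exactly $\pi^{-1}$ of the critical set of $h_1$, so it suffices to prove that $h_1$ has no critical point on $B$.

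Since $B$ is complete, Theorem~\ref{thm4} gives three possibilities for $h_1$. Case (1) (two critical points) forces $B$ compact and is ruled out by Corollary~\ref{cor2}. For Case (2), suppose $h_1$ has a single critical point $p_0 \in B$. By Tashiro's radial construction used in the proof of Theorem~\ref{thm4}, a neighborhood of $p_0$ is isometric to the center of the warped product $([0, \varepsilon), dr^2) \times_\phi (\mathbb{S}^{n-1}, g_{can})$ with $\phi(r) = |\nabla^B h_1|(r)$ and $h_1$ radial, $h_1'(r) = \pm \phi(r)$. Smoothness of $g_B$ at $p_0$ forces the rotationally symmetric warping function to satisfy $\phi(0) = 0$ and $\phi'(0) = 1$; hence $h_1''(0) = \pm 1 \ne 0$, and reading the Hessian in normal coordinates at $p_0$ yields
\[
\Hessian h_1(p_0) \;=\; h_1''(0)\, g_B(p_0) \;\ne\; 0.
\]
On the other hand, evaluating \eqref{FE14} at $p_0$ gives
\[
R_B(p_0) - \lambda(p_0) \;=\; \frac{g_B(\nabla^B f, \nabla^B h_1)(p_0)}{f(p_0)} \;=\; 0,
\]
since $\nabla^B h_1(p_0) = 0$ and $f(p_0) > 0$, so the almost soliton equation $\Hessian h_1 = (R_B - \lambda) g_B$ forces $\Hessian h_1(p_0) = 0$. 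These two conclusions contradict each other, ruling out Case (2).

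Therefore only Case (3) of Theorem~\ref{thm4} survives: $h_1$ has no critical point, and $(B, g_B)$ is isometric to $(\mathbb{R}, dr^2) \times_{|\nabla^B h_1|} (N^{n-1}, g_N)$ for some complete Riemannian manifold $(N, g_N)$. Hence $h$ has no critical point on $M$, and since horizontal lifting preserves norms, $|\nabla h| = |\nabla^B h_1|$ as functions on $B$. Substituting the decomposition of $B$ into $M = B \times_f F$ produces the claimed isometry
\[
(M^{n+m}, g) \;\cong\; \bigl(\mathbb{R} \times_{|\nabla h|} N^{n-1} \times_f F^m,\; dr^2 + |\nabla h|^2 g_N + f^2 g_F\bigr).
\]
The main obstacle is Case (2): the smoothness condition $\phi'(0) = 1$ at the center of $B$ and the relation \eqref{FE14} pull $\Hessian h_1(p_0)$ in opposite directions, and reconciling them is what drives the contradiction.
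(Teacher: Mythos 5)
Your proposal is correct and follows the route the paper intends: reduce to the base via Theorem~\ref{thm2}/Corollary~\ref{cor1}, run the trichotomy of Theorem~\ref{thm4}, kill the two-critical-point case with Corollary~\ref{cor2}, and your exclusion of the one-critical-point case --- \eqref{FE14} forces $(R_B-\lambda)(p_0)=0$, hence $\Hessian h_1(p_0)=0$, while smoothness of the rotationally symmetric metric at the center forces $h_1''(0)=\pm\phi'(0)\neq 0$ --- is exactly the missing ingredient the paper leaves implicit, and it is sound. The only cosmetic caveat is that $\phi'(0)=1$ presupposes the unit round metric on $\mathbb{S}^{n-1}$; for the contradiction all you need is $\phi'(0)\neq 0$, which holds for any normalization.
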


\begin{corollary}\label{cor5}
    If $(M^{n+m}=B^n\times_f F^m, g= g_{B}+f^2 g_{F}, h, \rho)$ is a nontrivial warped product Yamabe gradient soliton with complete base $(B^n, g_B)$ and $g(\nabla log f, \nabla h)=c$, where $c\in \mathbb{R}$, then $c=0$ and  $(M^{n+m}, g)$ is isometric to 
        $(\mathbb{R}\times N^{n-1}\times_f F^m, dr^2+ g_N+f^2 g_F)$ with $(N^{n-1}, g_N)$ is a complete Riemannian manifold.
\end{corollary}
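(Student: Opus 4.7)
The plan is to translate the hypothesis into the statement that the scalar curvature of $M$ is constant, and then use Corollary~\ref{cor3} together with the Yamabe gradient soliton equation to force that constant to be zero. By Theorem~\ref{thm2}, $h = h_1\circ\pi$ with $h_1\in C^\infty(B)$, so both $\nabla h$ and $\nabla\log f = \nabla^B f/f$ are horizontal lifts of gradients on the base. Consequently
\[
g(\nabla\log f,\nabla h) \;=\; \frac{g_B(\nabla^B f,\nabla^B h_1)}{f} \;=\; R-\rho
\]
by \eqref{FE14}. The hypothesis therefore gives $R-\rho \equiv c$, and the soliton equation \eqref{GYS} reduces to $\Hessian h = c\,g$ on all of $M$.

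Next, I would invoke Corollary~\ref{cor3} to identify
\[
(M,g) \;\cong\; \bigl(\mathbb{R}\times_{|\nabla h|} N^{n-1}\times_f F^m,\; dr^2+|\nabla h|^2 g_N+f^2 g_F\bigr),
\]
in which $h = h_1(r)$ depends only on the $\mathbb{R}$-factor and $|\nabla h| = h_1'(r) > 0$ everywhere, since $h$ has no critical points. Because $g_{rr}=1$ and all Christoffel symbols $\Gamma^k_{rr}$ vanish in this chart, the $(\partial_r,\partial_r)$ component of $\Hessian h = c\,g$ yields $h_1''(r) = c$ on all of $\mathbb{R}$, so $h_1'(r) = c r + b$ for some constant $b$. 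But $h_1'$ is nowhere zero on $\mathbb{R}$, while a linear function of nonzero slope must vanish somewhere; this forces $c = 0$.

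With $c = 0$ we obtain $h_1' \equiv b > 0$, so $|\nabla h|$ is a positive constant. Absorbing $b^2$ into $g_N$ (which remains a complete Riemannian metric on $N^{n-1}$) collapses the warping on the base and produces the desired isometry onto $(\mathbb{R}\times N^{n-1}\times_f F^m,\, dr^2+g_N+f^2g_F)$. The only non-routine input is the doubly warped product structure from Corollary~\ref{cor3}; once that is in hand, the argument is a short constraint propagation from the $(\partial_r,\partial_r)$ equation combined with the absence of critical points of $h$, and I do not anticipate any serious obstacle beyond verifying the vanishing of $\Gamma^k_{rr}$, which is immediate from the form of the metric.
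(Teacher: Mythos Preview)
Your argument is correct and is essentially the intended one: the paper states Corollary~\ref{cor5} without proof, as an immediate consequence of Corollary~\ref{cor3} together with \eqref{FE14} and \eqref{HE}, and your write-up supplies exactly those details. The identification $g(\nabla\log f,\nabla h)=R-\rho$ via \eqref{FE14}, the reduction to $h_1''(r)=c$ in the splitting coming from Corollary~\ref{cor3}, and the observation that a nonconstant affine function must vanish on $\mathbb{R}$ are precisely the steps the paper has in mind.
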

\begin{remark}
    The corollary \ref{cor5} is a generalization of \cite[Proposition $1.5$]{TBAP2}, where an isometric structure was obtained under the assumptions that the soliton function depends on the base, the fiber is of constant scalar curvature, and $g(\nabla log f, \nabla h)=c\neq 0$.
\end{remark}

\section{\textbf{Scalar Curvature and Potential Function Estimation}}
 Now, let us discuss the scalar curvature of warped product Yamabe gradient solitons. Trivial solitons always have constant scalar curvature $R=\rho$. There are nontrivial warped product Yamabe gradient solitons of scalar curvature being unbounded below.
\begin{example}\cite{TBAP1}\label{exm6}
    Let $\mathbb{R}^3_*=\{(x_1, x_2, x_3)\in \mathbb{R}^3:x_1+x_2+x_3>0\}$. The warped product $M^6={\mathbb{R}^3}_*\times_f \mathbb{R}^3$ equipped with the warped metric $g=\frac{20}{x_1+x_2+x_3}(dx_1^2+dx_2^2+dx_3^2)+f^2 (dy_1^2+dy_2^2+dy_3^2)$, where the warping function $f(x_1, x_2, x_3)=\sqrt{\frac{20}{x_1+x_2+x_3}}$, is a  steady nontrivial warped product Yamabe gradient soliton with the soliton function 
        $h(x_1, x_2, x_3, y_1, y_2, y_3)=20 ln (x_1+x_2+x_3)$. 
    The scalar curvature of $(M^6, g)$ is $R=-\frac{1}{x_1+x_2+x_3}$, which is unbounded below.
\end{example}
In this section, we give a criterion by which the scalar curvature is bounded below and estimate the soliton function. First, we 
Consider the base to be complete. 
\begin{theorem}\label{thm5}
        Let $(M^{n+m}=B^n\times_f F^m, g=g_B+f^2g_F, h, \rho)$ be a nontrivial warped product Yamabe gradient soliton with complete base $(B^n, g_B)$. If 
        \begin{eqnarray*}
        \inf_{|\nabla^B h|\neq0}\frac{(n+m-1)}{n-1}{^{B}}\mathrm{Ric}(\frac{\nabla^B h}{|\nabla^B h|}, \frac{\nabla^B h)}{|\nabla^B h|})-\frac{1}{2(n+m-1)}{\Hessian} h(\frac{\nabla^B h}{|\nabla^B h|}, \frac{\nabla^B h}{|\nabla^B h|})=A,
        \end{eqnarray*}
          for some $A\in \mathbb{R}$,
          then the scalar curvature $R$ of $(M^{n+m}, g)$ is bounded from below and there exists a positive constant $C$ such that the following holds:
        \begin{enumerate}
              \item If $\rho\ge0$ then $R\ge -(n+m-1)C|A|$;
            \item  If $\rho< 0$ then $R\ge -(n+m-1)C|A|+\rho$.
        \end{enumerate}
        
\end{theorem}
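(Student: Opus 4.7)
The plan is to reduce the problem to a one-dimensional ODE analysis in $r$ using Corollary \ref{cor3}, and then combine the hypothesis (recast as an ODE inequality) with the standard scalar-curvature elliptic equation for Yamabe gradient solitons via a maximum-principle argument on the complete line.

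By Corollary \ref{cor3}, $(M,g)$ is isometric to the doubly warped product $(\mathbb{R}\times_{|\nabla h|}N^{n-1}\times_{f}F^{m},\,dr^{2}+|\nabla h|^{2}g_{N}+f^{2}g_{F})$ with $N$ complete; in particular $h=h(r)$ and (without loss of generality) $h'(r)>0$ everywhere, so the soliton equation $\Hessian h=(R-\rho)g$ forces $h''(r)=R(r)-\rho$, and $R$ also depends only on $r$. The warped-product Ricci formula on $B=\mathbb{R}\times_{h'}N^{n-1}$ yields ${}^{B}\mathrm{Ric}(\partial_{r},\partial_{r})=-(n-1)h'''(r)/h'(r)=-(n-1)R'(r)/h'(r)$ and $\Hessian h(\partial_{r},\partial_{r})=R(r)-\rho$, so after substitution the hypothesis becomes the pointwise ODE inequality
\begin{equation}\label{ODEhyp}
-(n+m-1)\frac{R'(r)}{h'(r)}-\frac{R(r)-\rho}{2(n+m-1)}\ge A\qquad \text{for every } r\in\mathbb{R}.
\end{equation}

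Next I would invoke the standard elliptic equation for $R$ on any Yamabe gradient soliton, obtained by taking the divergence of $\Hessian h=(R-\rho)g$ twice and applying the contracted Bianchi identity (cf.\ \cite[Page 363]{daskalopoulos}):
\[
(n+m-1)\Delta R+\tfrac{1}{2}g(\nabla R,\nabla h)+R(R-\rho)=0,
\]
which in our reduced setting specializes to
\begin{equation}\label{EllR1D}
(n+m-1)R''(r)+(n+m-1)^{2}(R-\rho)\frac{R'(r)}{h'(r)}+\tfrac{1}{2}R'(r)h'(r)+R(r)(R(r)-\rho)=0.
\end{equation}
The lower bound on $R$ then follows by applying the maximum principle on the complete line $(\mathbb{R},dr^{2})$. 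If $R$ attains its infimum at some $r_{0}\in\mathbb{R}$, then $R'(r_{0})=0$ and $R''(r_{0})\ge 0$, so \eqref{EllR1D} reduces to $R(r_{0})(R(r_{0})-\rho)\le 0$, placing $R(r_{0})\in[\min(0,\rho),\max(0,\rho)]$; this gives $R\ge\min(0,\rho)$, which implies the claimed bound in both cases for any sufficiently large $C>0$. If instead the infimum is only approached as $r\to\pm\infty$, I would integrate \eqref{ODEhyp} in the $h$-variable using the integrating factor $e^{h/[2(n+m-1)^{2}]}$ to establish monotonicity of $e^{\alpha h}[R-\rho+2(n+m-1)A]$ (with $\alpha=[2(n+m-1)^{2}]^{-1}$); this simultaneously rules out $\inf R=-\infty$ and permits extracting a subsequence $r_{n}\to\pm\infty$ along which $R'(r_{n})/h'(r_{n})\to 0$, after which passage to the limit in \eqref{EllR1D} yields the same conclusion $R_{\infty}(R_{\infty}-\rho)\le 0$ for $R_{\infty}:=\lim_{n}R(r_{n})$.

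The main obstacle is the non-attainment case: one must rule out $\inf R=-\infty$ (where the nonlinear term $R(R-\rho)$ in \eqref{EllR1D} blows up to $+\infty$) and simultaneously extract a minimizing subsequence along which $R'/h'\to 0$. Both rely on the quantitative interplay between \eqref{ODEhyp} and \eqref{EllR1D}, and the explicit constant $C$ appearing in the claimed estimate is precisely the quantitative output of this argument.
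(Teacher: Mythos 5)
Your one-dimensional reduction is essentially sound: by Corollary \ref{cor3} the base is $(\mathbb{R},dr^{2})\times_{|\nabla^{B}h|}N$, $h=h(r)$ with $h'>0$, the $rr$-component of \eqref{GYS} gives $h''=R-\rho$ so that $R$ is radial, ${}^{B}\mathrm{Ric}(\partial_{r},\partial_{r})=-(n-1)h'''/h'=-(n-1)R'/h'$, and the specialization of the scalar-curvature elliptic equation to your ODE is correct \emph{provided} you also use \eqref{FE14}, which forces $\partial_{r}\ln f=h''/h'$ and hence $\Delta r=(n+m-1)h''/h'$ --- a step you use tacitly. The integrating-factor computation is also correct: your hypothesis does give that $e^{\alpha h}\bigl[R-\rho+2(n+m-1)A\bigr]$, $\alpha=\tfrac{1}{2(n+m-1)^{2}}$, is nonincreasing in $r$, and the case in which $R$ attains its infimum is handled correctly. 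The genuine gap is the non-attainment case, which is the heart of the theorem since nothing forces $\inf R$ to be attained. The monotonicity does \emph{not} rule out $\inf R=-\infty$: for $r$ larger than the reference point it only yields an \emph{upper} bound on $R$ (there $e^{-\alpha h}$ is bounded because $h$ is increasing), and for $r$ smaller it yields a lower bound only when the monotone quantity is nonnegative at the reference point or $h$ is bounded below; if it is negative and $h\to-\infty$ as $r\to-\infty$, no lower bound follows. Likewise, the extraction of a minimizing sequence $r_{k}\to\pm\infty$ along which $R'(r_{k})/h'(r_{k})\to0$ --- and, implicitly, $R'(r_{k})h'(r_{k})\to0$ together with $\liminf R''(r_{k})\ge0$, all of which you need in order to pass to the limit in your ODE --- is unsubstantiated: an Omori--Yau-type argument on $(\mathbb{R},dr^{2})$ controls $R'$ and $R''$ at approximate minima, but $h'$ may tend to $0$ or $\infty$ along such a sequence (indeed $h'$ decays wherever $R<\rho$), so smallness of $R'$ gives no control on $R'/h'$ or $R'h'$.

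This is exactly the difficulty the paper's proof is designed to avoid: instead of a global maximum principle on the line, it runs a localized Cheng--Yau-type argument on the base, minimizing $v=\alpha\bigl(d/(Dr_{0})\bigr)R$ over the tubular neighbourhoods $(-Dr_{0},Dr_{0})\times N$, where the minimum is always attained, using the elliptic equation on $B$ (your equation with the extra term $\tfrac{m}{f}g_{B}(\nabla^{B}f,\nabla^{B}R)$) and converting the hypothesis on ${}^{B}\mathrm{Ric}$ and $\Hessian h$ into a comparison estimate for $\Delta_{B}d+g_{B}\bigl(\nabla^{B}d,\nabla^{B}\ln f^{m}+\tfrac{1}{2(n+m-1)}\nabla^{B}h\bigr)$ at the minimum point; the cutoff derivatives produce the constant $C$, and letting $D\to\infty$ gives the stated bounds. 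To complete your route you would either have to supply the missing quantitative analysis at infinity (ruling out $\inf R=-\infty$ and controlling $R'/h'$, $R'h'$, $R''$ along a minimizing sequence) or localize as the paper does; as written, the key case is asserted rather than proved.
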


\begin{proof}
   Corollary \ref{cor1} and Corollary \ref{cor3} together imply that $(B^n, g_B, h_1, \lambda)$ is a complete nontrivial almost Yamabe gradient soliton isometric to $(\mathbb{R}, dr^2)\times_{|\nabla^B h_1|}(N, \Bar{g}_N)$, where $(N, \Bar{g}_N)$ is a complete Riemannian manifold, $h=\pi^*h_1$ and $\lambda$ defined as \eqref{FE10}. For simplification, we take $h_1=h$.. \\
   Define a function $d:{B}^n\rightarrow \mathbb{R}$ by
   \[
    d(x)=d(r, \theta)=
     \begin{cases} 
      r, & \text{if } r>0, \\
      0, & \text{if }  r=0, \\
      -r, & \text{if }  r<0.
    \end{cases}
   \]
   Consider a tubular neighbouhood of $N$ in $B$ as $(-r_0,r_0)\times N$ for some $r_0>0$.
   For any fixed constant $D>2$, consider $v(x)=\alpha(\frac{d(x)}{Dr_0})R(x)$, where $R$ is the scalar curvature of $(M^{m+n}, g)$ and $\alpha$ is a  smooth nonnegative decreasing function such that 
   \[
    \alpha \equiv
     \begin{cases} 
      1, & \text{if } \frac{d(x)}{Dr_0} \leq \frac{1}{2}, \\
    0, & \text{if }  \frac{d(x)}{Dr_0} \ge 1.
    \end{cases}
   \]
   If $\min_{x\in B} v\ge 0$, then $R\ge 0$ on $(-\frac{Dr_0}{2}, \frac{Dr_0}{2})\times N$.\\
   If $\min_{x\in B} v< 0$, then there exists a point $x_1\in (-Dr_0, Dr_0)\times N$ such that $v(x_1)=\alpha R(x_1)<0$. Since $x_1$ is a minimum of the function $v(x)$ then $\alpha^{'} R(x_1)>0$, $\nabla^B v(x_1)=0$, and $\Delta_B v(x_1)\ge 0$.\\
   \textbf{Case I:} When $x_1\in (-r_0, r_0)\times N$.
   Using  the elliptic equation of the scalar curvature of a Yamabe gradient soliton in \cite[Page 363]{daskalopoulos} 
   we get for shrinking solitons $x_1\not\in (-r_0, r_0)\times N$ and for expanding solitons $R\ge \rho$ on $(-\frac{Dr_0}{2}, \frac{Dr_0}{2})\times N$.\\
  \textbf{Case II:} When $x_1\not\in (-r_0, r_0)\times N$. Let $x_1=(r_1, \theta_1)$, where $\theta_1=(\theta_1^2, ..., \theta_1^n)$.
    Again, by the elliptic equation of the scalar curvature of a Yamabe gradient soliton in \cite[Page 363]{daskalopoulos}, we deduce that
\begin{equation}\label{FE19}
      \Delta_B v=R \Delta_B \alpha+\alpha \Delta_B R+2g_B(\nabla^B \alpha, \nabla^B R)
\end{equation}
    together with,
   \begin{equation}\label{FE20}
     (n+m-1)\Delta_B R+\frac{(n+m-1)m}{f}g_B(\nabla^B f, \nabla^B R)+\frac{1}{2} g_B(\nabla^B R, \nabla^B h)+ R(R-\rho)=0
   \end{equation}
Subsituting $\Delta_B R$ from $\eqref{FE20}$ into $\eqref{FE19}$ we get,
   \small{\begin{eqnarray*}
       \Delta_B v(x_1)=\frac{v(x_1)}{\alpha}[\{\frac{\alpha''}{(Dr_0)^2}-2\frac{{\alpha'}^2}{\alpha}\frac{1}{(Dr_0)^2}\}+\frac{\alpha'}{Dr_0}\{\Delta_B d+g_B(\nabla^B d, \nabla^Blnf^m+\frac{1}{2(n+m-1)}\nabla^B h)\}\\-\frac{v(x_1)}{(n+m-1)}+\frac{\rho}{(n+m-1)}\alpha]
   \end{eqnarray*}}
    For $r_1>0$, 
    since $\frac{\partial}{\partial r}=\phi(r)\nabla^B h$,
    by using \eqref{GYS}, \eqref{FE14}, and 
    \cite[Lemma $2.3. (2)$]{barbosa} we get,
    \begin{eqnarray*}
         \{\Delta_B d+g_B(\nabla^B d, \nabla^Blnf^m+\frac{1}{2(n+m-1)}\nabla^B h)\}(x_1)\le \int_{0}^{r_1} \phi(r)^2 \{(-1-\frac{m}{n-1})
         {^B}Ric (\nabla^B h, \nabla^B h)\\+\frac{1}{2(n+m-1)} {\nabla^2} h(\nabla^B h,\nabla^B h)\}dr+(n-1)\frac{h''(0)}{h'(0)}+m\frac{\partial}{\partial r} lnf(0,\theta_1)+\frac{|\nabla^B h|_N}{2(n+m-1)}
    \end{eqnarray*}
    Since 
    $\inf_B {\frac{(n+m-1)}{n-1}{^{B}}Ric(\frac{\nabla^B h}{|\nabla^B h|}, \frac{\nabla^B h}{|\nabla^B h|})-\frac{1}{2(n+m-1)}{^{B}}{Hess}(h)(\frac{\nabla^B h}{|\nabla^B h|}, \frac{\nabla^B h}{|\nabla^B h|})}=A$,
    \begin{eqnarray*}\label{FE21}
         \{\Delta_B d+g_B(\nabla^B d, \nabla^Blnf^m+\frac{1}{2(n+m-1)}\nabla^B h)\}(x_1)\le -Ad(x_1)+(n-1)\frac{h''(0)}{h'(0)}+m \frac{\partial}{\partial r} lnf (0, \theta_1)\\+\frac{|\nabla^B h|_N}{2(n+m-1)}    
    \end{eqnarray*}
Similarly for $r_1<0$,
   \small{\begin{eqnarray*}\label{FE22}
        \{\Delta_B d+g_B(\nabla^B d, \nabla^Blnf^m+\frac{1}{2(n+m-1)}\nabla^B h)\}(x_1)\le -Ad(x_1)-(n-1)\frac{h''(0)}{h'(0)}
        -m \frac{\partial}{\partial r} lnf (0, \theta_1)\\-\frac{|\nabla^B h|_N}{2(n+m-1)} 
    \end{eqnarray*}}
    Hence,
    \begin{align*}
        \Delta_B v(x_1)\le\frac{v(x_1)}{\alpha}[\{\frac{\alpha''}{(Dr_0)^2}-2\frac{\alpha'}{\alpha}\frac{1}{(Dr_0)^2}\}+\frac{\alpha'}{Dr_0}\{-A d(x_1)+(n-1)\frac{|h''(0)|}{|\nabla^B h|_N}+m |\frac{\partial}{\partial r} lnf (0,\theta_1)|\\
        +\frac{|\nabla^B h|_N}{2(n+m-1)}\}
       -\frac{v(x_1)}{(n+m-1)}+\frac{\rho}{(n+m-1)}\alpha\\
        \le\frac{|v(x_1)|}{\alpha}[\{\frac{|\alpha''|}{(Dr_0)^2}+2\frac{|\alpha'|}{\alpha}\frac{1}{(Dr_0)^2}\}+\frac{|\alpha'|}{Dr_0}\{|A| d(x_1)+(n-1)\frac{|h''(0)|}{|\nabla^B h|_N}+m |\frac{\partial}{\partial r} lnf (0,\theta_1)|\\
        +\frac{|\nabla^B h|_N}{2(n+m-1)}\}
       -\frac{|v(x_1)|}{(n+m-1)}-\frac{\rho}{(n+m-1)}\alpha]
    \end{align*}
     There exist a positive constant $C$ such that $|\alpha'|\le C$, $|\alpha''|\le C$, and l$\frac{{\alpha'}^2}{\alpha}\le C$.\\
    Let $C_1=2m\frac{\partial}{\partial r}\ln f (0,\theta_1)$, $C_2=\frac{|\nabla^B h|_N}{2(n+m-1)}$, and $C_3=(n-1)\frac{|h''(0)|}{|\nabla^B h|_N}$.
    Therefore on $(-\frac{Dr_0}{2},\frac{Dr_0}{2})\times N$,\\
    for $\rho\ge 0$
    \begin{equation}\label{FE23}
        R\ge -(n+m-1)C\{\frac{3}{(Dr_0)^2}+\frac{(C_1+C_2+C_3)}{Dr_0}+|A|\}
    \end{equation}
    and for $\rho<0$
    \begin{equation}\label{FE24}
        R\ge -(n+m-1)C\{\frac{3}{(Dr_0)^2}+\frac{(C_1+C_2+C_3)}{Dr_0}+|A|\}+\rho
    \end{equation}
    Comparing the lower bounds of $R$ in the above cases 
    and
    taking $D\rightarrow \infty $,  we get the required result.
\end{proof}
\begin{example}\label{exm7}
    The warped product $M^{2+m}=\mathbb{R}^2\times_f \mathbb{H}^m$ equipped with the warped product metric $g=g_{\mathbb{R}^n}+\frac{f^2}{2} g_{\mathbb{H}^m}$, where the warping function  $f(x_1, x_2)=\cosh{(x_1+x_2)}$, is of constant scalar curvature $R=-2m(m+1)$. $(M, g)$ is a complete nontrivial warped product soliton with soliton function $h(x_1, x_2, y_1, ..., y_m)=x_1-x_2$.
\end{example}
\begin{corollary}\label{cor6}
    Let $(M^{(n+m)}=B^n\times_f F^m, g=g_B+f^2g_F, h, \rho)$ be a warped product Yamabe gradient soliton with complete base $(B^n, g_B)$ such that the scalar curvature $R$ of $(M^{n+m}, g)$ is bounded above. If  $^{B}Ric(\nabla^B h, \nabla^B h)\ge A {|\nabla^B h|}^2 $ for some $A\in \mathbb{R}$ then  the scalar curvature $R$ is bounded on $M$.
\end{corollary}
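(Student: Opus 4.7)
The plan is to reduce the corollary to a direct application of Theorem \ref{thm5}. Since $R$ is already assumed to be bounded above, the only thing to prove is that $R$ is bounded from below on $M$, and Theorem \ref{thm5} provides exactly this conclusion once we verify that the quantity
\[
\inf_{|\nabla^B h|\neq 0}\frac{(n+m-1)}{n-1}\,{^{B}\mathrm{Ric}}\!\left(\frac{\nabla^B h}{|\nabla^B h|}, \frac{\nabla^B h}{|\nabla^B h|}\right)-\frac{1}{2(n+m-1)}\Hessian h\!\left(\frac{\nabla^B h}{|\nabla^B h|}, \frac{\nabla^B h}{|\nabla^B h|}\right)
\]
is a finite real number.

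The Ricci piece is handled immediately by the hypothesis: dividing the inequality ${^{B}\mathrm{Ric}}(\nabla^B h,\nabla^B h)\ge A|\nabla^B h|^2$ by $|\nabla^B h|^2$ (on the set where $|\nabla^B h|\neq 0$) yields a uniform lower bound $\frac{n+m-1}{n-1}A$ on the first term. The main step is to bound the Hessian piece from above. For this I would invoke Corollary \ref{cor1}: the soliton function is a pullback $h=h_1\circ\pi$ and $(B^n,g_B,h_1,\lambda)$ is a nontrivial almost Yamabe gradient soliton, so $\Hessian h=(R_B-\lambda)g_B$ on $B$. Combined with the identity \eqref{FE14}, $R_B-\lambda=R-\rho$, this gives
\[
\Hessian h(\nabla^B h,\nabla^B h)=(R-\rho)|\nabla^B h|^2.
\]
Hence the normalized Hessian term equals $R-\rho$, and the hypothesis that $R$ is bounded above (say $R\le K$ on $M$) yields $\Hessian h(\tfrac{\nabla^B h}{|\nabla^B h|},\tfrac{\nabla^B h}{|\nabla^B h|})\le K-\rho$.

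Putting the two bounds together, the infimum in question is bounded below by the finite constant
\[
\frac{n+m-1}{n-1}A-\frac{K-\rho}{2(n+m-1)},
\]
so it equals some $A'\in\mathbb{R}$. Theorem \ref{thm5} then supplies a constant $C>0$ and a lower bound $R\ge -(n+m-1)C|A'|$ (or shifted by $\rho$ in the expanding case). Combined with the hypothesis $R\le K$, this shows $R$ is bounded on $M$.

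I do not foresee a serious obstacle: the argument is essentially an observation that the assumption on $R$ being bounded above is exactly what is needed to convert the Ricci lower bound into the mixed lower bound required by Theorem \ref{thm5}, via the soliton identities. The only mild subtlety is to record the distinction between the cases $\rho\ge 0$ and $\rho<0$ when quoting Theorem \ref{thm5}, but in either case the lower bound produced is a finite real constant, which is all that is needed.
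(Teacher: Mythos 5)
Your argument is correct and is essentially the derivation the paper intends, since Corollary \ref{cor6} is stated without proof as a direct consequence of Theorem \ref{thm5}: the soliton equation \eqref{GYS} gives $\Hessian h(\nabla^B h,\nabla^B h)=(R-\rho)|\nabla^B h|^2$, so the assumed upper bound on $R$ together with the Ricci lower bound makes the infimum in Theorem \ref{thm5} a finite real number, and the resulting lower bound on $R$ combined with the upper bound gives boundedness. The only cosmetic remark is that the trivial case ($h$ constant, hence $R=\rho$) can be dispatched separately so that the appeal to Corollary \ref{cor1} and Theorem \ref{thm5}, which concern nontrivial solitons, is legitimate.
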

\begin{corollary}\label{cor7}
     Under the hypothesis of Theorem \ref{thm5}, the following holds:
    \begin{enumerate}
       \item If $\rho\ge 0$ then $h(x)\ge -(n+m-1)C|A| \frac{r^2}{2}+C_1 r+C_2$.
        \item If $\rho <0$ then  $h(x)\ge \{-(n+m-1)C|A|+\rho\} \frac{r^2}{2}+ C_1r+C_2$.
    \end{enumerate}
    whenever in local coordinates $x=(r, \theta^2,..., \theta^n, \psi^1,...,\psi^m)$.
\end{corollary}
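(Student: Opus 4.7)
The plan is to reduce the Hessian equation \eqref{GYS} to a second--order ODE in the distinguished radial coordinate $r$ and then integrate it twice.

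By Corollary \ref{cor3}, the completeness of the base forces the isometric splitting
\[
    (M^{n+m}, g)\ \cong\ \bigl(\mathbb{R}\times_{|\nabla h|}N^{n-1}\times_{f}F^{m},\ dr^{2}+|\nabla h|^{2}g_{N}+f^{2}g_{F}\bigr),
\]
with $h=h(r)$ depending only on the first factor; in particular $\nabla h=h'(r)\,\partial_{r}$, and $\partial_{r}$ is a unit geodesic vector field in the ambient doubly warped metric, so $\nabla_{\partial_{r}}\partial_{r}=0$. Evaluating \eqref{GYS} on the pair $(\partial_{r},\partial_{r})$ therefore collapses the Hessian to a plain second derivative and yields
\[
    h''(r)\ =\ R(x)-\rho,
\]
which in particular shows that $R$ depends only on $r$, so that the pointwise integration of the ODE below is meaningful.

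Next, I insert the scalar--curvature estimate of Theorem \ref{thm5}: in the case $\rho\ge 0$ this gives $h''(r)\ge L$ with $L=-(n+m-1)C|A|$, and in the case $\rho<0$ it gives $h''(r)\ge L$ with $L=-(n+m-1)C|A|+\rho$. Fixing a basepoint on the distinguished line at $r=0$ and integrating the inequality $h''(s)\ge L$ twice yields
\[
    h'(r)\ \ge\ h'(0)+Lr,\qquad h(r)\ \ge\ h(0)+h'(0)\,r+L\,\frac{r^{2}}{2},
\]
and setting $C_{1}:=h'(0)$ and $C_{2}:=h(0)$ produces the quadratic lower bound claimed in each of the two items.

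The only genuinely substantive step is the first one: without the rigid splitting provided by Corollary \ref{cor3}, the Hessian equation would be multidimensional and the conclusion, which is one--variable in $r$, would not even be well--posed. Once that rigidity is in hand, the remainder of the argument is a one--line ODE integration feeding off the scalar--curvature estimate of Theorem \ref{thm5}; the constants $C_{1},C_{2}$ are simply the initial data $h'(0),h(0)$ at the chosen basepoint.
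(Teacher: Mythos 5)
Your route is the intended one: the paper states this corollary without proof, and it is meant to follow exactly as you argue — the rigidity of Corollary \ref{cor3} (equivalently Theorem \ref{thm4}, cf.\ \eqref{HE} together with \eqref{FE14}) gives $h=h(r)$ with $h''(r)=R-\rho$ along the unit-speed $r$-lines, and one then integrates the scalar curvature lower bound of Theorem \ref{thm5} twice, taking $C_1=h'(0)$ and $C_2=h(0)$. One bookkeeping caveat: since $h''=R-\rho$, Theorem \ref{thm5}(1) literally yields $h''\ge -(n+m-1)C|A|-\rho$ when $\rho>0$, so your claim $h''\ge -(n+m-1)C|A|$ silently drops a $-\rho$ (while for $\rho<0$ you in fact get the stronger bound $h''\ge -(n+m-1)C|A|$); this imprecision is already present in the corollary as stated, so it reflects the paper's formulation rather than a flaw specific to your argument.
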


\begin{remark}
    Our assumption is weaker than the hypothesis stated in Theorem 1.4 in \cite{TBAP1}, where the authors assumed the warped product metric to be complete, and for some $K\in \mathbb{R}$
    \begin{equation}\label{FE27}
        ^{B}Ric -m \Hessian \log f-\frac{1}{2(n+m-1)} \Hessian h\ge K.
    \end{equation}
    Whenever $|\nabla^B h|\neq 0$, invoking the unit direction $\frac{\nabla^B h}{|\nabla^B h|}$ in \eqref{FE27} we get
    \begin{eqnarray*}
        \frac{(n+m-1)}{n-1}{^{B}}Ric(\frac{\nabla^B h}{|\nabla^B h|}, \frac{\nabla^B h}{|\nabla^B h|})-\frac{1}{2(n+m-1)}{\Hessian} h(\frac{\nabla^B h}{|\nabla^B h|}, \frac{\nabla^B h}{|\nabla^B h|})+\{\frac{\nabla^B h(f)}{f |\nabla^B h|}\}^2\ge K.
    \end{eqnarray*}
\end{remark}

For an incomplete base, we prove the following result by using a geodesic ball instead of a tubular neighbourhood. The rest of the technique remains the same as in the proof of Theorem \ref{thm5}.
\begin{theorem}\label{thm6}
    Let $(M^{(n+m)}=B^n\times_f F^m, g=g_B+f^2g_F, h, \rho)$ be a nontrivial warped product Yamabe gradient soliton with 
    noncomplete base $(B^n, g_B)$. If $B^n$ is a submanifold of a complete almost Yamabe gradient soliton $(\Bar{B}^n, \Bar{g}_{\Bar{B}}, h, \Bar{\lambda})$ with $h$ having only one critical point $p$ on $\Bar{B}$ and $\Bar{\lambda}|_B=\lambda$ \eqref{FE10} such that  
    \begin{enumerate}
    \renewcommand{\labelenumi}{\roman{enumi}.}
       \item $\Bar{B}^n\backslash \{p\}=B^n$ and $\bar{g}|_B=g_B$;
        \item \label{item:two} there exists $s>0$ such that $R_B-\lambda$ is nonincreasing on $B_s(p)\setminus\{p\}$, where $R_B$ is the scalar curvature of $(B, g_B)$;
        \item $\inf {{^B}Ric(\frac{\nabla^B h}{|\nabla^B h|}}, \frac{\nabla^B h}{|\nabla ^B h|})=A_1$, for some $A_1\in \mathbb{R}$ and $\sup {\Hessian} h(\frac{\nabla^B h}{|\nabla^B h|}, \frac{\nabla^B h}{|\nabla^B h|})=A_2$, for some $A_2\ge 0$ if $\rho \le 0$ or $A_2 \ge -\rho$ if $\rho>0$.
    \end{enumerate}
         Then the scalar curvature $R$ of $(M, g)$ is bounded and there exists a positive constant $C$ such that the following holds:
        \begin{enumerate}
              \item If $\rho\ge 0$ then $-C\{\frac{(n+m-1)^2}{(n-1)}|A_1|+\frac{|A_2|}{2}\} \le R\le A_2+\rho$;
            \item  If $\rho< 0$ then $ -C\{\frac{(n+m-1)^2}{(n-1)}|A_1|+\frac{|A_2|}{2}\}+\rho \le R \le A_2+\rho$.
        \end{enumerate}
        
\end{theorem}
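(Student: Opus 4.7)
The plan is to mirror the argument of Theorem~\ref{thm5}, replacing the tubular neighbourhood of $N$ by a geodesic ball about the unique critical point $p$ of $h$ on the completion $\bar B$, and to read the upper bound directly off the almost-soliton equation on the base. Corollary~\ref{cor1} first provides the almost Yamabe gradient soliton $(\bar{B}^n, \bar{g}, h, \bar{\lambda})$, and because $h$ has a single critical point at $p$ on $\bar B$, Theorem~\ref{thm4}(2) gives the isometry $\bar B \cong ([0,\infty), dr^2) \times_{|\nabla^{\bar B} h|} (\mathbb{S}^{n-1}, g_{can})$, with $r$ the $\bar g$-distance from $p$. Restricted to $B = \bar B \setminus \{p\}$ this furnishes polar coordinates $(r,\theta) \in (0,\infty) \times \mathbb{S}^{n-1}$, and $\nabla^B h = h'(r)\partial_r$ is nonvanishing on $B$.

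For the upper bound, the almost-soliton equation $\Hessian^B h = (R_B - \lambda) g_B$ combined with \eqref{FE14} gives $R - \rho = R_B - \lambda$; evaluating at the unit vector $\nabla^B h / |\nabla^B h|$ and invoking the supremum hypothesis from (iii) yields
$$R - \rho \;=\; \Hessian h\!\left(\frac{\nabla^B h}{|\nabla^B h|},\frac{\nabla^B h}{|\nabla^B h|}\right) \;\le\; A_2,$$
so $R \le A_2 + \rho$. For the lower bound, fix $0 < r_0 < s$ with $s$ from (ii), choose $D > 2$, and set $v(x) = \alpha(d(x)/Dr_0) R(x)$ with $d(x) = r$ and the same cutoff $\alpha$ used in Theorem~\ref{thm5}. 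If $\min v \ge 0$ the conclusion is immediate; otherwise the minimum is negative, attained at some $x_1 \in B_{Dr_0}(p) \setminus \{p\}$, where $\nabla^B v(x_1)=0$ and $\Delta_B v(x_1) \ge 0$. Split the analysis at the inner radius $s$: when $x_1 \in B_s(p) \setminus \{p\}$, the monotonicity in (ii) together with the elliptic equation \eqref{FE20} either rules out this configuration in the shrinking case or forces $R \ge \rho$ in the expanding case; when $x_1 \notin B_s(p)$, the Laplacian-comparison computation of Theorem~\ref{thm5} applies, except that the separated bounds ${}^B\mathrm{Ric}\ge A_1$ and $\Hessian h \le A_2$ on the unit radial direction contribute $\tfrac{(n+m-1)^2}{n-1}|A_1|$ and $\tfrac{|A_2|}{2}$ respectively to the coefficient of $C$. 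Absorbing $|\alpha'|,|\alpha''|,{\alpha'}^2/\alpha \le C$ and letting $D \to \infty$ yields (1) and (2).

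The principal obstacle is the behaviour near the puncture. In Theorem~\ref{thm5} the tubular neighbourhood of $N$ keeps the cutoff uniformly away from any singular locus, whereas here a negative minimum of $v$ could, a priori, accumulate on a sequence approaching $p$ and invalidate the maximum-principle step. Hypothesis (ii) is precisely what prevents this: the monotonicity of $R_B - \lambda = R - \rho$ on $B_s(p)\setminus\{p\}$ pushes any negative minimum $x_1$ into the outer annulus $B_{Dr_0}(p)\setminus B_s(p)$, on which the radial Laplacian-comparison estimate from Theorem~\ref{thm5}, now driven by the separated Ricci and Hessian bounds of (iii), can be executed exactly as in the complete case.
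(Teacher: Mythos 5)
Your proposal follows essentially the same route as the paper's proof: the structure theorem (Theorem \ref{thm4}) for $\bar B$, the cutoff function $v=\alpha R$ built from the distance to the critical point $p$, the case split near/away from $p$ using hypothesis (ii) together with the elliptic scalar-curvature equation and a radial Laplacian/drift comparison driven by the separated bounds in (iii), the limit $D\to\infty$, and the upper bound $R\le A_2+\rho$ read directly off the Hessian equation. The only cosmetic differences are that the paper fixes $r_0=2s$ and excludes a small inner ball $B_{r_0/2D}(p)$ from the minimization (rather than arguing via (ii) that the minimum cannot accumulate at the puncture), and it carries out the $\Delta_B d$ estimate by Jacobi fields and the standard index comparison theorem rather than by invoking the Theorem \ref{thm5} computation verbatim.
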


\begin{proof}
    Theorem \ref{thm4} implies that $(\Bar{B}, {\bar{g}}_{\bar{B}})$ is isometric to $([0, \infty), dr^2)\times_{|\nabla^{\bar{B}} h|}(\mathbb{S}^{(n-1)}, \Bar{g}_{can})$. 
   Since, $h$ has a critical point $p\in \Bar{B}$, by fixing $p$ define a function $d:{\Bar{B}}^n\rightarrow \mathbb{R}$ by $d(x):=d(x,p)$.\\ 
   Fix $r_0=2s$.
   For any fixed constant $D>2$, define $v$ on $B$ as in the proof of Theorem \ref{thm5}.\\ 
  If $\min_{x\in B\setminus B_{\frac{r_0}{2D}}(p)} v\ge 0$, then $R\ge 0$ on $B_{\frac{1}{2}Dr_0}(p)\setminus B_{\frac{r_0}{2D}}(p)$.\\
  If $\min_{x\in B\setminus B_{\frac{r_0}{2D}}(p)} v< 0$, then there exists a point $x_1\in B_{Dr_0}(p)\setminus B_{\frac{r_0}{2D}}(p)$ such that $v(x_1)=\alpha R(x_1)<0$. If $x\neq \partial B_\frac{r_0}{2D}$(p), 
   then $\alpha^{'} R(x_1)>0$, $\nabla^B v(x_1)=0$, and $\Delta_B v(x_1)\ge 0$.\\
\textbf{Case I:} When $x_1\in B_{r_0}(p)\setminus B_{\frac{r_0}{2D}}(p)$, Using 
the elliptic equation of the sclar curvature of a Yamabe gradient soliton in \cite[Page 363]{daskalopoulos} and condition $(ii)$  we get for shrinking solitons $x_1\not \in B_{r_0}(p)\setminus B_{\frac{r_0}{2D}}(p) $ and for expanding solitons $R\ge \rho$ on $B_{\frac{Dr_0}{2}}(p)\setminus B_{\frac{r_0}{2D}}(p)$.\\
\textbf{Case II:} Let $x_1\not\in B_{r_0}(p)\setminus B_{\frac{r_0}{2D}}(p)$.
Similar as in the prrof of Theorem \ref{thm5} we get,
\begin{eqnarray*}
\Delta_B v(x_1)=\frac{v(x_1)}{\alpha}[\{\frac{\alpha''}{(Dr_0)^2}-2\frac{{\alpha'}^2}{\alpha}\frac{1}{(Dr_0)^2}\}+\frac{\alpha'}{Dr_0}\{\Delta_B d+g_B(\nabla^B d, \nabla^Blnf^m+\frac{1}{2(n+m-1)}\nabla^B h)\} (x_1)\\
-\frac{v(x_1)}{(n+m-1)}+\frac{\rho}{(n+m-1)}\alpha]
\end{eqnarray*}
Note that the curve $\sigma:[0, d(x_1)] \rightarrow \Bar{B}$ defined by $\sigma(r)=(r, \theta=\theta_1(\text{constant}))$ be a unit speed length minimizing geodesic from $p$ to $x_1$ such that $\sigma^{'}(r)=\phi(r)\nabla^{\bar{B}} k_{\sigma(r)}$, for $r>0$ with $\sigma(0)=p$ and $\sigma^{'}(0)=\frac{\partial}{\partial r}$.
   Let $\{\sigma^{'}(r), E_1(r), E_2(r),..., E_{n-1}(r)\}$ be a parallel orthonormal basis of $T\Bar{B}$ along $\sigma$.
  Let $J_i(r), i=1, 2, ..., (n-1)$, be the Jacobian fields along $\sigma$ with $X_i(0)=0$ and $X_i(d(x_1))=E_i(d(x_1))$. Then, by \cite{schoen} we have,
  \begin{center}
      $\Delta_{\Bar{B}} d(x_1)=\Sigma_{i=1}^{n-1}\int_0^{d(x_1)}[|{{X_i}^{'}}|^2-R(\sigma^{'}, X_i, \sigma^{'}, X_i)] dr$.
  \end{center}
  Define vector fields $Y_i, i=1, 2, 3,..., n-1$, along $\sigma$ as follows:
 \[
    Y_i(r)=
     \begin{cases} 
      \frac{r}{r_0}E_i(r), & \text{if } r\in[0, r_0], \\
      E_i(r), & \text{if }  r\in[r_0, d(x_1)].
    \end{cases}
   \]
 By the standard index comparison theorem,
 \begin{align*}
     \Delta_{\Bar{B}} d(x_1)&\le\Sigma_{i=1}^{n-1}\int_0^{d(x_1)}[|{{Y_i}^{'}}|^2-R(\sigma^{'}, Y_i, \sigma^{'}, Y_i)] dr\\
    \implies \Delta_{B} d(x_1) &\le\int_0^{r_{0}}\{\frac{n-1}{{r_0}^2}-\frac{r^2}{r_0^2}{^{B}Ric(\sigma^{'}, \sigma ^{'})}\} dr-\int_{r_0}^{d(x_1)} {^{B}Ric(\sigma^{'}, \sigma^{'})} dr
 \end{align*}
Since $\inf_B {^B}Ric (\frac{\nabla^B h}{|\nabla^B h|}, \frac{\nabla^B h}{|\nabla^B h|})= A_1$,
\begin{equation}\label{FE12}
    \Delta_B d(x_1)\le\frac{n-1}{r_0}+\frac{2}{3}A_1 r_0-A_1d(x_1)
\end{equation}


Let $p_0$ be the point on $B$ such that $d(p, p_0)=r_0$ and lies on the shortest geodesic joining $p$ and $x_1$. Since $^{B}Ric(\nabla^B h, \nabla^B h)\ge A_1 {|\nabla^B h|}^2$ and $^{B}{Hess}(h)(\nabla^B h, \nabla^B h)\le A_2{|\nabla^B h|^2}$, by a similar procedure to the proof of Theorem \ref{thm5}, we deduce that, 
\begin{align*}
     g_B(\nabla_B d, \nabla^Blnf^m+\frac{1}{2(n+m-1)}\nabla^B h)(x_1)\le -\frac{m A_1}{n-1} d(x_1)+\frac{A_2}{2(n+m-1)} d(x_1)\\
     +|\nabla^B h|_{p_0}+\sigma' (\ln f)_{p_0}
\end{align*}
Hence,
\begin{eqnarray*}
        \Delta_B v(x_1)\le \frac{|v(x_1)|}{\alpha}[\{\frac{|\alpha''|}{(Dr_0)^2}+2\frac{|\alpha'|}{\alpha}\frac{1}{(Dr_0)^2}\}+\frac{|\alpha'|}{Dr_0}\{\frac{n-1}{r_0}+\frac{2|A_1|r_0}{3}+\frac{(n+m-1)|A_1|}{(n-1)} d(x_1)\\+\frac{|A_2|}{2(n+m-1)}d(x_1)+|\nabla^B h|_{p_0}+|\sigma'(\ln f)_{p_0}|\}
       -\frac{|v(x_1)|}{(n+m-1)}-\frac{\rho}{(n+m-1)}\alpha]
    \end{eqnarray*}
    Let $C_1=|\nabla^B h|_{p_0}$ and $C_2=|\sigma'(\ln f)|_{p_0}$. Following the same method to the proof of Theorem \ref{thm5}, 
    We get the required result.
\end{proof}
\begin{example}\cite{topping}\label{exm5}
    Let $\mathbb{R}_{+}=\{r\in\mathbb{R}: r>0\}$. The punctured Hamilton cigar soliton 
    $\mathbb{C}^2_*$ is the warped product $\mathbb{R}_{+}\times_{\tanh{r}} \mathbb{S}^1$ equipped with the warped metric $g=dr^2+\tanh^2{r} g_{can}$ of scalar curvature $R=\frac{4}{\cosh^2{r}}$. $(\mathbb{C}^2_*, g)$ is a noncomplete nontrivial steady warped product Yamabe gradient soliton of bounded scalar curvature with the soliton function
        $h(r, \theta)= 2\ln \cosh{r}$.
\end{example}
\begin{example}
     Consider the punctured Euclidean $n-$ space $\mathbb{E}^n_*$  equipped with the metric $g_{\mathcal{E}}=dr^2+r^2 g_{can}$, where $g_{can}$ is the round metric on the sphere $\mathbb{S}^{n-1}$. Then $M^{2n}=\mathbb{E}^n_*\times_r \mathbb{S}^n$ equipped with the metric $g_M=g_\mathcal{E}+r^2\Bar{g}$, where $\Bar{g}=\frac{1}{3}g_{can}$is a noncomplete nontrivial warped product  Yamabe gradient soliton of zero scalar curvature with the soliton function $h(r, \theta, \phi)=-\frac{\rho r^2}{2}$, where $\rho$ is a nonzero constant. 
\end{example}
\begin{corollary}\label{cor9}
     Under the hypothesis of Theorem \ref{thm6}, the following holds:
    \begin{enumerate}
       \item If $\rho\ge 0$ then $h(x)\ge -C\{\frac{(n+d-1)^2}{(n-1)}|A_1|+\frac{|A_2|}{2}\}\frac{d(x)^2}{2}+C_1 d(x)+C_2$.
        \item If $\rho <0$ then  $h(x)\ge \{-C\{\frac{(n+d-1)^2}{(n-1)}|A_1|+\frac{|A_2|}{2}\}+\rho\} \frac{d(x)^2}{2}+ C_1d(x)+C_2$.
        
    \end{enumerate}
    Where $d$ is the distance function from the critical point  of $h$ on $\Bar{B}$ and $C_1, C_2\in\mathbb{R}$.
\end{corollary}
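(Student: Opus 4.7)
The plan is to reduce the estimate on $h$ to a one-variable ODE in the radial parameter $r = d(x)$ and then integrate the scalar curvature lower bound of Theorem \ref{thm6} twice. Under the hypothesis of Theorem \ref{thm6}, Corollary \ref{cor1} identifies $(B^n, g_B, h, \lambda)$ as a nontrivial almost Yamabe gradient soliton, while Theorem \ref{thm4} furnishes an isometry
\[
(\Bar{B}, \Bar{g}_{\Bar{B}}) \cong \bigl([0, \infty), dr^2\bigr) \times_{|\nabla^{\Bar{B}} h|} (\mathbb{S}^{n-1}, \Bar{g}_{can}),
\]
with the critical point $p$ corresponding to $r = 0$ and the soliton function depending only on $r$.

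In these coordinates, $\nabla^B h = h'(r)\,\partial_r$ and one has $\Gamma^r_{rr} = 0$, so evaluating $\Hessian_B h = (R_B - \lambda)\,g_B$ on the radial field $\partial_r$ produces the scalar identity $h''(r) = R_B(r) - \lambda(r)$. Combined with the relation $R_B - \lambda = R - \rho$ from \eqref{FE14} of Corollary \ref{cor1}, this gives the pointwise ODE
\[
h''(r) = R(r) - \rho
\]
along every radial geodesic in $B = \Bar{B}\setminus\{p\}$. This is exactly the same reduction used in \eqref{HE} within the proof of Theorem \ref{thm4}, only now read as a differential inequality after plugging in curvature bounds.

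Next, I would substitute the lower bound on $R$ supplied by Theorem \ref{thm6}. With $K = C\bigl\{\tfrac{(n+m-1)^2}{n-1}|A_1| + \tfrac{|A_2|}{2}\bigr\}$, one obtains $h''(r) \geq -K - \rho$ when $\rho \geq 0$ and $h''(r) \geq -K$ when $\rho < 0$. Two successive integrations in $r$ then yield a bound of the form $h(r) \geq -(\text{quadratic coefficient})\,\tfrac{r^2}{2} + C_1 r + C_2$, where the coefficient matches the split by the sign of $\rho$ in the statement and the two integration constants are collected into $C_1, C_2$.

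The plan is routine once the ODE is in place. The only subtlety, and the one point I would be careful about, is that the natural base point $r = 0$ lies in $\Bar{B}\setminus B$: strictly speaking the integration is carried out on $\Bar{B}$ and the bound on $B$ is obtained by restriction. The smoothness of the isometric completion provided by Theorem \ref{thm4} gives continuity of $h$ and $h'$ across $r=0$, so one can legitimately take the limit $r \to 0^+$ and absorb any residual boundary contribution into the constants $C_1, C_2$. Beyond this small technical point there is no substantial obstacle—the statement is a direct consequence of Theorem \ref{thm6} plus the radial structure of the base.
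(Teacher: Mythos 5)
Your reduction is the right one, and it is evidently what the authors intend (the paper states Corollary \ref{cor9} without proof): by Theorem \ref{thm4} the completed base is the Tashiro model $([0,\infty),dr^2)\times_{|\nabla^{\bar B}h|}(\mathbb{S}^{n-1},\bar g_{can})$, $h$ depends only on $r=d(x)$, radial curves are unit-speed geodesics, and the soliton equation together with \eqref{FE14} gives $h''(r)=R-\rho$ along them; integrating twice from $r=0$ (where $h$ extends smoothly and $h'(0)=0$ at the critical point) produces a quadratic lower bound, and your handling of the removed point $p$ is fine.

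The gap is in the final bookkeeping. Writing $K=C\{\tfrac{(n+m-1)^2}{n-1}|A_1|+\tfrac{|A_2|}{2}\}$, you correctly derive $h''\ge -K-\rho$ when $\rho\ge 0$ and $h''\ge -K$ when $\rho<0$, but these do \emph{not} ``match the split by the sign of $\rho$ in the statement'' as you assert. For $\rho<0$ your integration gives $h\ge -K\,\tfrac{d(x)^2}{2}+C_1d(x)+C_2$, which is stronger than the stated coefficient $-K+\rho$, so that case is fine. For $\rho>0$, however, integrating $h''\ge -K-\rho$ yields the coefficient $-(K+\rho)$, strictly weaker than the stated $-K$, and the deficit $-\rho\,\tfrac{d(x)^2}{2}$ cannot be absorbed into $C_1d(x)+C_2$ because $d$ is unbounded on $\bar B\setminus\{p\}$. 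To get the statement verbatim for shrinking solitons you would need a lower bound on $R-\rho$ of the form $R-\rho\ge -K$ (equivalently $R\ge \rho-K$), which Theorem \ref{thm6} as stated does not supply; so either prove such a refinement or record the coefficient $-(K+\rho)$ in that case — the discrepancy suggests the corollary's $\rho\ge 0$ case (like Corollary \ref{cor7}) is itself stated imprecisely, and your write-up should flag this rather than claim the coefficients agree. (Minor: the statement's $(n+d-1)$ is clearly a typo for $(n+m-1)$, which you silently and correctly use.)
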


\printbibliography
\vspace{2mm}
\address{INDIAN INSTITUTE OF SCIENCE EDUCATION AND RESEARCH BHOPAL, MADHYA PRADESH-462066, INDIA.}

\email{\textit{E-mail address: jahnabi22@iiserb.ac.in}}

\email{\textit{E-mail address: anand@iiserb.ac.in}}

\end{document}